\numberwithin{equation}{section} \theoremstyle{plain}
\newtheorem{thm}{Theorem}[section]
\newtheorem{prop}[thm]{Proposition}
\newtheorem{lem}[thm]{Lemma}
\newtheorem{cor}[thm]{Corollary}
\newtheorem{defn}[thm]{Definition}
\newtheorem{rem}[thm]{Remark}
\newtheorem*{acknow}{Acknowledgments}
\def\<{\langle}
\def\>{\rangle}
\def\({\left(}
\def\){\right)}
\def\[{\left[}
\def\]{\right]}
\def\tr{\mathop{\text{tr}}}
\title{An isoperimetric inequality of minimal hypersurfaces in spheres}
\author[F.G. Li]{Fagui Li${}^{*1,2}$}
\address{$^{1}$
China Beijing International Center for Mathematical Research, Peking University, Beijing 100871, P.R. CHINA.}
\email{faguili@bicmr.pku.edu.cn}
\address{$^{2}$
School of Mathematical Sciences, Laboratory of Mathematics and Complex Systems, Beijing Normal University, Beijing 100875, P.R. CHINA.}
\email{faguili@mail.bnu.edu.cn}
\author[N. Chen]{Niang Chen${}^{+}$}
\address{$^{+}$
Department of Mathematics, Faculty of Arts and Sciences, Beijing Normal University, Zhuhai 519087, China}
\email{chenniang@bnu.edu.cn}
\subjclass[2010]{53A10, 53C42, 53C24.}
\date{}
\keywords{Isoperimetric inequality, minimal hypersurface, nodal set, Cheeger's isoperimetric constant.}
\thanks {$^{*}$ the corresponding author.}
\thanks{F. G. Li is partially supported by  NSFC (No. 12171037, 12271040) and China Postdoctoral Science Foundation (No. 2022M720261).}
\thanks{N. Chen  is partially supported by the start-up funding for research provided
by Beijing Normal University (No. 310432120).}
\begin{document}
\maketitle

\begin{abstract}
Let $ M^n$ be a closed immersed minimal  hypersurface   in  the unit sphere $\mathbb{S}^{n+1}$.
We establish a  special isoperimetric inequality of $M^n$. As an application, if the scalar curvature of $ M^n$ is constant,  then we  get a uniform lower bound independent of $M^n$ for the  isoperimetric inequality. In addition,  we obtain an inequality between Cheeger's isoperimetric constant  and the volume of the nodal set of the height function.
\end{abstract}

\section{Introduction}
The isoperimetric inequalities have always been an important subject in differential geometry and they are  bridges of analysis and geometry. There are some elegant works on isoperimetric inequalities (cf. \cite{Brendle S The isoperimetric inequality JMS,J Choe and Gulliver 1992,Osserman Robert 1980 isoperimetric,Yau Isoperimetric constants 1975}, etc).

Let $x: M^n\looparrowright   \mathbb{S}^{n+1} \subset \mathbb{R}^{n+2}$ be a closed immersed minimal hypersurface  in the unit sphere and denote by $\nu(x)$ a (local) unit normal vector field of $M^n$,
$\nabla$ and $\overline\nabla$  be the Levi-Civita connections on $M^n$ and  $\mathbb{S}^{n+1}$,  respectively. Let $A$ be the shape operator with respect to $\nu$, i.e., $A(X)=-\overline\nabla_X\nu$. The squared length of the second fundamental form is $S=\|A\|^2$.
For any unit vector $a \in \mathbb{S}^{n+1}$,  the height functions are defined as
\begin{equation*}\label{height functions}
\varphi_a(x) = \langle x,a \rangle,\quad  \psi_a (x)=\langle \nu,a \rangle.
\end{equation*}
These two functions are very basic and important. For instance,
 the well known Takahashi theorem \cite{Takahashi 1966} states that \emph{$M^n$ is minimal if and only if there exists a constant $\lambda$ such that $\Delta \varphi_a =-\lambda\varphi_a$
for all $a \in \mathbb{S}^{n+1}$}. Analogously, Ge and Li \cite{Ge Li 2020} gave a Takahashi-type theorem, i.e., \emph{an immersed hypersurface $M^n$ in  $\mathbb{S}^{n+1}$  is minimal and has constant scalar curvature (CSC) if and only if $\Delta \psi_a=\lambda\psi_a$ for some constant $\lambda$ independent of $a \in \mathbb{S}^{n+1}$.} 
This condition is linked to the famous Chern Conjecture (cf. \cite{Chang 1993,M Scherfner S Weiss and  Yau 2012,TWY18,TY20,Cheng Qing Ming and H Yang 1998}, etc), which states that \emph{a closed  immersed minimal CSC hypersurface of $\mathbb{S}^{n+1}$  is isoparametric.}

Let  ${\left\lbrace |\varphi_a|\geq t\right\rbrace }={\left \lbrace x\in M^n: |\varphi_a|\geq t\right\rbrace }$ and  ${\left\lbrace |\varphi_a|= t\right\rbrace }={\left \lbrace x\in M^n: |\varphi_a|= t\right\rbrace }$. In particular,
due to  $\Delta\varphi_a=-n\varphi_a$ and  $ a \in \mathbb{S}^{n+1}$,
 $${\left\lbrace \varphi_a= 0\right\rbrace }={\left \lbrace x\in M^n: \varphi_a= 0\right\rbrace }$$
is the nodal set of  the eigenfunction $\varphi_a$. Here,
the zero set of the eigenfunction of an elliptic operator, and its complement are called the nodal set, and nodal domain, respectively.
Suppose
  $
  S_{\max}=\sup_{p\in M^n}S(p),
  $
$$
\theta_1=\frac{\int_{M}S}
{2nS_{\max} {\rm Vol }\left( M^n \right) },\ \
\theta_2=\frac{n}{4n^2-3n+1}
 \frac{
 \left( {\int_{M} }S\right) ^2}
{ {\rm Vol }\left( M^n \right) \int_{M} S^2},
$$
and
$$
C_1=\max\{\theta_1,\theta_2\},\ \
C_2=\inf_{s\leq r\leq 1}\frac{2+nr\ln\left(\frac{1-s^2}{1-r^2} \right)}{2+n\ln\left(\frac{1-s^2}{1-r^2} \right)  }.
$$
We use ${\rm Vol}$ to represent the volume measure in this paper and
  the following special isoperimetric inequality is the main result.
\begin{thm}\label{thm Volume estimation of minimal hypersurface varphi2a and varphia}
Let $ M^n$ be a closed immersed, non-totally geodesic,  minimal  hypersurface   in  $\mathbb{S}^{n+1}$.
\begin{itemize}
\item [(i)]
For all $0 \leq s<1$ and $a\in \mathbb{S}^{n+1}$, the following inequality holds:
$$
 {\rm Vol }{\left\lbrace |\varphi_a|=s\right\rbrace }\geq
 C(n,s,S)
{\rm Vol }{\left\lbrace |\varphi_a|\geq s\right\rbrace },
$$
where
$$
 C(n,s,S)=
 \left\{
\begin{array}{rcl}
    \frac{nC_1}{2C_2},& \ \  &s=0;\\
    \frac{nC_1}{C_2\sqrt{1-s^2}},&\ \   &0<s\leq \min\left\lbrace \sqrt{C_1},\frac{C_1}{C_2}\right\rbrace;\\
    \frac{ns}{\sqrt{1-s^2}},&\ \   &\min\left\lbrace \sqrt{C_1},\frac{C_1}{C_2}\right\rbrace< s<1.
\end{array}
 \right.
$$
\item [(ii)]
$$
\frac{\left( n+1\right) {\rm Vol}\left( \mathbb{S}^{n+1}\right)
}{n{\rm Vol}\left(\mathbb{S}^{n} \right)}
\sup_{a\in \mathbb{S}^{n+1}}
 {\rm Vol }{\left\lbrace \varphi_a=0\right\rbrace }
 \geq{\rm Vol}(M^n).
$$
  \end{itemize}
\end{thm}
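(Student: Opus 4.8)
The plan is to build everything on two elementary facts about the height function. First, writing the ambient decomposition $a=\varphi_a\,x+\psi_a\,\nu+a^{T}$, where $a^{T}$ is the part of $a$ tangent to $M^n$, one sees that $a^{T}=\nabla\varphi_a$, so that
$$|\nabla\varphi_a|^2=1-\varphi_a^2-\psi_a^2\le 1-\varphi_a^2.$$
Second, Takahashi's theorem gives $\Delta\varphi_a=-n\varphi_a$. Applying the divergence theorem to $\nabla\varphi_a$ over the superlevel set $\{\varphi_a\ge s\}$ and the symmetric set $\{\varphi_a\le -s\}$, and noting that the outward conormal along $\{|\varphi_a|=s\}$ is $-\nabla\varphi_a/|\nabla\varphi_a|$, yields the master identity
$$n\int_{\{|\varphi_a|\ge s\}}|\varphi_a|=\int_{\{|\varphi_a|=s\}}|\nabla\varphi_a|.$$
Both parts of the theorem will be extracted from this identity together with the pointwise gradient bound above.

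For part (ii) I would take $s=0$, so the master identity reads $n\int_M|\varphi_a|=2\int_{\{\varphi_a=0\}}|\nabla\varphi_a|$, and then use that on the nodal set $|\nabla\varphi_a|^2=1-\psi_a^2\le 1$. This gives the pointwise-in-$a$ estimate $\mathrm{Vol}\{\varphi_a=0\}\ge\tfrac n2\int_M|\varphi_a|$. Integrating over $a\in\mathbb{S}^{n+1}$ and exchanging the order of integration reduces matters to the one-dimensional average $\int_{\mathbb{S}^{n+1}}|\langle x,a\rangle|\,da$, which for the unit vector $x$ equals $\tfrac{2}{n+1}\mathrm{Vol}(\mathbb{S}^n)$ by the slicing formula $\int_{\mathbb{S}^{n+1}}|a_1|\,da=\tfrac{2}{n+1}\mathrm{Vol}(\mathbb{S}^n)$. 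Hence $\int_{\mathbb{S}^{n+1}}\mathrm{Vol}\{\varphi_a=0\}\,da\ge\tfrac{n}{n+1}\mathrm{Vol}(\mathbb{S}^n)\,\mathrm{Vol}(M^n)$, and bounding the average by the supremum gives exactly (ii). The only place a factor is lost is the crude step $|\nabla\varphi_a|\le 1$; retaining $\psi_a^2$ would sharpen the constant, but this is not needed for the stated inequality.

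For part (i) the regime $\min\{\sqrt{C_1},C_1/C_2\}<s<1$ is immediate: in the master identity bound $|\varphi_a|\ge s$ on the left and $|\nabla\varphi_a|\le\sqrt{1-s^2}$ on the right, producing $\mathrm{Vol}\{|\varphi_a|=s\}\ge\tfrac{ns}{\sqrt{1-s^2}}\mathrm{Vol}\{|\varphi_a|\ge s\}$. The substance is the small-$s$ regime, where the factor $s$ on the left is too weak and must be replaced by a genuine lower bound for $\int_{\{|\varphi_a|\ge s\}}|\varphi_a|$ in terms of $\mathrm{Vol}\{|\varphi_a|\ge s\}$. Here I would bring in the second fundamental form through the companion function $\psi_a$, the identity $|\nabla\varphi_a|^2=1-\varphi_a^2-\psi_a^2$, and Simons-type integral relations, obtaining two competing lower bounds: one controlled by $S_{\max}$ and $\int_M S$ (giving $\theta_1$) and one controlled by $(\int_M S)^2/\int_M S^2$ via Cauchy--Schwarz (giving $\theta_2$), whose maximum is $C_1$. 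Separately, inserting $|\nabla\varphi_a|\le\sqrt{1-t^2}$ into the coarea formula produces a first-order differential inequality for the distribution function $V(t)=\mathrm{Vol}\{|\varphi_a|\ge t\}$; integrating it from $s$ to $r$ is what generates the logarithmic terms $\ln\frac{1-s^2}{1-r^2}$ and, after optimizing over $r\in[s,1]$, the factor $C_2$. Combining the $C_1$ lower bound with the $C_2$ comparison, and matching the three expressions at the thresholds $\sqrt{C_1}$ and $C_1/C_2$, yields the piecewise constant $C(n,s,S)$.

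The main obstacle is precisely this small-$s$ analysis: producing a lower bound for $\int_{\{|\varphi_a|\ge s\}}|\varphi_a|$ that does not degenerate as $s\to 0$. Unlike the large-$s$ case it cannot come from the master identity alone, and it forces one to exploit the intrinsic geometry of $M^n$ through $S$. Making the two estimates $\theta_1,\theta_2$ combine cleanly with the distribution-function comparison $C_2$, and verifying that the three branches of $C(n,s,S)$ agree at the gluing points, is where the real effort lies.
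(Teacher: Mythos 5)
Your treatment of part (ii) and of the branch $\min\{\sqrt{C_1},C_1/C_2\}<s<1$ of part (i) is correct and is essentially the paper's own argument: the same divergence-theorem identity $n\int_{\{|\varphi_a|\geq s\}}|\varphi_a|=\int_{\{|\varphi_a|=s\}}|a^{\rm T}|$, the same pointwise bound $|a^{\rm T}|\leq\sqrt{1-s^2}$, and for (ii) the same integration over $a\in\mathbb{S}^{n+1}$ with the slicing value $\tfrac{2}{n+1}\mathrm{Vol}(\mathbb{S}^n)$ followed by bounding the average by the supremum.

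The genuine gap is the small-$s$ regime of part (i), which is the actual content of the theorem and which you leave as a plan rather than a proof. Two inputs are needed there and neither is established. First, the bound $\inf_{a}\int_M\varphi_a^2\geq C_1\,\mathrm{Vol}(M^n)$ with the specific constants $\theta_1,\theta_2$: this is a nontrivial result that the paper itself does not prove but quotes as a lemma from Ge--Li (arXiv:2101.03753); your phrase ``Simons-type integral relations \dots via Cauchy--Schwarz'' describes its statement, not a derivation. One must also localize it to superlevel sets: since $\varphi_a^2<s^2$ off $\{|\varphi_a|\geq s\}$, the global bound gives $\int_{\{|\varphi_a|\geq s\}}\varphi_a^2\geq C_1\,\mathrm{Vol}\{|\varphi_a|\geq s\}$ only when $s^2\leq C_1$ --- this is precisely where the threshold $\sqrt{C_1}$ in the statement comes from, and your outline never accounts for it. Second, and more seriously, the constant $C_2$ enters through the comparison
$$
\int_{\{|\varphi_a|\geq s\}}\varphi_a^2\ \leq\ C_2\int_{\{|\varphi_a|\geq s\}}|\varphi_a|
$$
(the paper's Lemma 2.3), which is what converts the $L^2$ lower bound above into the non-degenerate $L^1$ lower bound $\int_{\{|\varphi_a|\geq s\}}|\varphi_a|\geq\tfrac{C_1}{C_2}\mathrm{Vol}\{|\varphi_a|\geq s\}$ that your scheme requires. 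Your proposed mechanism --- a first-order differential inequality for $V(t)=\mathrm{Vol}\{|\varphi_a|\geq t\}$, integrated from $s$ to $r$ and optimized in $r$ --- does not by itself deliver this: integrating such an inequality yields decay (upper) bounds on $V$, or on $F(t)=\int_{\{|\varphi_a|\geq t\}}|\varphi_a|$, whereas what is needed is an upper bound on the \emph{ratio} of the two weighted integrals. The paper obtains it by combining the integrated decay of $F$ with the pointwise splitting $\varphi_a^2\leq r|\varphi_a|$ on $\{s\leq|\varphi_a|<r\}$ and $\varphi_a^2\leq|\varphi_a|$ on $\{|\varphi_a|\geq r\}$, and then optimizing a convex-combination parameter $u_0=\frac{n\ln\left(\frac{1-s^2}{1-r^2}\right)}{2+n\ln\left(\frac{1-s^2}{1-r^2}\right)}$; it is exactly this interpolation that produces the form $\frac{2+nr\ln\left(\frac{1-s^2}{1-r^2}\right)}{2+n\ln\left(\frac{1-s^2}{1-r^2}\right)}$ and hence $C_2$. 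You yourself flag this combination as ``where the real effort lies''; that effort \emph{is} the proof, so as written the cases $s=0$ and $0<s\leq\min\{\sqrt{C_1},C_1/C_2\}$ remain unproven.
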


Obviously, if $ M^n$ is a closed immersed minimal CSC hypersurface $($non-totally geodesic$)$   in  $\mathbb{S}^{n+1}$, then $C_1=\theta_1=\frac{1}{2n}$ in Theorem  \ref{thm Volume estimation of minimal hypersurface varphi2a and varphia} 
 and one has
\begin{cor}\label{cor Volume estimation of minimal hypersurface varphi2a and varphia S equiv constant}
Let $ M^n$ be a closed immersed, non-totally geodesic,  minimal CSC  hypersurface   in  $\mathbb{S}^{n+1}$. Then for all $0 \leq s<1$ and $a\in \mathbb{S}^{n+1}$, the following inequality holds:
$$
 {\rm Vol }{\left\lbrace |\varphi_a|=s\right\rbrace }\geq
 C(n,s)
{\rm Vol }{\left\lbrace |\varphi_a|\geq s\right\rbrace },
$$
where
$$
 C(n,s)=
 \left\{
\begin{array}{rcl}
    \frac{1}{4C_2},& \ \  &s=0;\\
    \frac{1}{2C_2\sqrt{1-s^2}},&\ \   &0<s\leq \min\left\lbrace \sqrt{\frac{1}{2n}},\frac{1}{2nC_2}\right\rbrace;\\
    \frac{ns}{\sqrt{1-s^2}},&\ \   &\min\left\lbrace \sqrt{\frac{1}{2n}},\frac{1}{2nC_2}\right\rbrace< s<1.
\end{array}
 \right.
$$
\end{cor}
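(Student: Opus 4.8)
The plan is to derive the corollary directly from Theorem~\ref{thm Volume estimation of minimal hypersurface varphi2a and varphia}, since the constant scalar curvature (CSC) hypothesis only affects the quantity $C_1=\max\{\theta_1,\theta_2\}$; everything else in the statement is already in place. First I would recall that for a minimal hypersurface in $\mathbb{S}^{n+1}$ the Gauss equation expresses the scalar curvature as a constant minus $S=\|A\|^2$, so that $M^n$ is CSC if and only if $S\equiv S_0$ for a single constant $S_0$. The non-totally geodesic hypothesis forces $S_0>0$, which guarantees that the denominators $S_{\max}$, $\int_M S^2$, and $\mathrm{Vol}(M^n)$ appearing in $\theta_1,\theta_2$ are all nonzero.

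Next I would substitute the CSC data $S_{\max}=S_0$, $\int_M S=S_0\,\mathrm{Vol}(M^n)$, and $\int_M S^2=S_0^2\,\mathrm{Vol}(M^n)$ into the definitions of $\theta_1$ and $\theta_2$. All factors of $S_0$ and of $\mathrm{Vol}(M^n)$ cancel, leaving the purely numerical values $\theta_1=\tfrac{1}{2n}$ and $\theta_2=\tfrac{n}{4n^2-3n+1}$, which no longer depend on the geometry of $M^n$.

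The only genuine computation is then to decide which of these is larger. I would take the difference and factor the numerator:
\begin{equation*}
\theta_1-\theta_2=\frac{1}{2n}-\frac{n}{4n^2-3n+1}=\frac{2n^2-3n+1}{2n\,(4n^2-3n+1)}=\frac{(2n-1)(n-1)}{2n\,(4n^2-3n+1)}.
\end{equation*}
Since $4n^2-3n+1$ has negative discriminant it is positive for every $n$, and $(2n-1)(n-1)\geq 0$ for all $n\geq 1$; hence $\theta_1\geq\theta_2$ and $C_1=\theta_1=\tfrac{1}{2n}$.

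Finally I would insert $C_1=\tfrac{1}{2n}$ into the three branches of $C(n,s,S)$ supplied by Theorem~\ref{thm Volume estimation of minimal hypersurface varphi2a and varphia}: the case $s=0$ gives $\tfrac{nC_1}{2C_2}=\tfrac{1}{4C_2}$; the middle case gives $\tfrac{nC_1}{C_2\sqrt{1-s^2}}=\tfrac{1}{2C_2\sqrt{1-s^2}}$, with the threshold $\min\{\sqrt{C_1},C_1/C_2\}$ turning into $\min\{\sqrt{1/(2n)},1/(2nC_2)\}$; and the top case $\tfrac{ns}{\sqrt{1-s^2}}$ is already independent of $S$ and stays unchanged. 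This reproduces exactly the stated $C(n,s)$. The main (indeed only) obstacle is the bookkeeping: verifying that the two case-thresholds transform correctly and that the sign of $\theta_1-\theta_2$ is nonnegative for all admissible dimensions $n$.
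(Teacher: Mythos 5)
Your proposal is correct and is essentially the paper's own argument: the paper simply observes that CSC (equivalently, via the Gauss equation, $S\equiv S_0>0$) forces $C_1=\theta_1=\tfrac{1}{2n}$ and then reads off the corollary from Theorem~\ref{thm Volume estimation of minimal hypersurface varphi2a and varphia}. Your only addition is the explicit verification $\theta_1-\theta_2=\tfrac{(2n-1)(n-1)}{2n(4n^2-3n+1)}\geq 0$, a detail the paper dismisses as ``obvious'' but which is needed to conclude $\max\{\theta_1,\theta_2\}=\theta_1$.
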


More precisely, Corollary \ref{cor Volume estimation of minimal hypersurface varphi2a and varphia S equiv constant}   implies that the condition of constant scalar curvature has strong rigidity for minimal  hypersurfaces, since the constant $C(n,s)$ depends only on $n$ and $s$.
Hence, the volume of $M^n$ is strongly restricted by the volume of    nodal set of  the eigenfunctions $\varphi_a$  ($a\in \mathbb{S}^{n+1}$) for minimal CSC hypersurfaces (non-totally geodesic), i.e.,
$$
C_0(n)
 {\rm Vol }{\left\lbrace \varphi_a=0\right\rbrace }\geq
{\rm Vol }
\left(  M^n\right),
$$
where
$
C_0(n)=C(n,0)=4\inf_{0\leq r\leq 1}\frac{2-nr\ln\left(1-r^2 \right)}{2-n\ln\left(1-r^2 \right)  }.
$
 Besides, this rigid property  provides some evidence for the Chern Conjecture.

\begin{rem}\label{Integral-Einstein isoper inequalities}
Under the conditions of Corollary \ref{cor Volume estimation of minimal hypersurface varphi2a and varphia S equiv constant}, if $M^n$ is an Integral-Einstein  (see Definition \ref{defintion integral Eins manifold}) minimal  CSC  hypersurface in $\mathbb{S}^{n+1}$ (or  CSC hypersurface  with $S>n$ and constant third mean curvature), then the constant $C(n,s)$ can be improved (see Corollary \ref{cor Volume estimation of Einstein minimal hypersurface varphi2a and varphia S equiv constant}).
\end{rem}
 In 1984, Cheng-Li-Yau \cite{Cheng Li Yau 1984 Heat equations} proved  that \emph{if $M^n$ is a closed immersed minimal hypersurface in  $\mathbb{S}^{n+1}$ and $M^n$ is non-totally geodesic, then}
 \begin{equation*}
    {\rm Vol}(M^n)>\left( 1+\frac{3}{\widetilde{B}_n}\right) {\rm Vol}(\mathbb{S}^n),
 \end{equation*}
where $\widetilde B_n=2n+3+2\exp \left( 2n\widetilde C_n\right)$ and
  $\widetilde C_n= \frac{1}{2}n^{n/2}e\Gamma(n/2,1)$. 
Thus, we have
 \begin{cor}\label{thm Volume estimation of minimal hypersurface varphi0}
 Let $ M^n$ be a closed immersed, non-totally geodesic,  minimal CSC  hypersurface   in  $\mathbb{S}^{n+1}$. Then there is  a positive constant
   $\epsilon(n)>0$, depending only on $n$,
   such that
   $$
    {\rm Vol }{\left\lbrace \varphi_a=0\right\rbrace }\geq
   \epsilon(n){\rm Vol}(\mathbb{S}^n)\quad \textit{for all }  a\in \mathbb{S}^{n+1},
   $$
where $\epsilon(n)>\frac{1}{4}\left( 1+\frac{3}{\widetilde B_n}\right)
   \sup_{0\leq r\leq 1}
   \frac{2-n\ln\left(1-r^2 \right)  }{2-nr\ln\left(1-r^2 \right)}
   $.
 \end{cor}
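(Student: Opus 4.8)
The plan is to combine two inputs that are already available: the $s=0$ instance of Corollary \ref{cor Volume estimation of minimal hypersurface varphi2a and varphia S equiv constant}, which bounds the volume of the nodal set from below by a fixed multiple of ${\rm Vol}(M^n)$, and the Cheng--Li--Yau lower bound on ${\rm Vol}(M^n)$ for non-totally geodesic minimal hypersurfaces quoted just above.

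First I would specialize Corollary \ref{cor Volume estimation of minimal hypersurface varphi2a and varphia S equiv constant} to $s=0$. Since $|\varphi_a|\geq 0$ holds everywhere on $M^n$, the superlevel set $\left\lbrace |\varphi_a|\geq 0\right\rbrace$ is all of $M^n$, so the inequality collapses to
$$
{\rm Vol}\left\lbrace \varphi_a=0\right\rbrace \geq C(n,0)\,{\rm Vol}(M^n),
$$
where $C(n,0)=\frac{1}{4C_2}$ and $C_2=\inf_{0\leq r\leq 1}\frac{2-nr\ln(1-r^2)}{2-n\ln(1-r^2)}$ is the value of $C_2$ at $s=0$. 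Rewriting the reciprocal of an infimum as a supremum of reciprocals, one has
$$
C(n,0)=\frac{1}{4}\sup_{0\leq r\leq 1}\frac{2-n\ln(1-r^2)}{2-nr\ln(1-r^2)}.
$$

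Next I would invoke the Cheng--Li--Yau estimate: since $M^n$ is closed, immersed, minimal and non-totally geodesic, one has ${\rm Vol}(M^n)>\left(1+\frac{3}{\widetilde B_n}\right){\rm Vol}(\mathbb{S}^n)$. Chaining this with the previous display gives, for every $a\in\mathbb{S}^{n+1}$,
$$
{\rm Vol}\left\lbrace \varphi_a=0\right\rbrace \geq C(n,0)\,{\rm Vol}(M^n)>C(n,0)\left(1+\frac{3}{\widetilde B_n}\right){\rm Vol}(\mathbb{S}^n).
$$
Setting $\epsilon(n)=C(n,0)\left(1+\frac{3}{\widetilde B_n}\right)$ and substituting the supremum expression for $C(n,0)$ reproduces exactly the asserted lower bound on $\epsilon(n)$.

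There is essentially no analytic obstacle here; the statement is a formal corollary of the two cited estimates, and no new geometric input is needed. The only points requiring care are bookkeeping ones: correctly identifying $\left\lbrace |\varphi_a|\geq 0\right\rbrace$ with $M^n$ so that the $s=0$ case yields a bound purely in terms of ${\rm Vol}(M^n)$, and faithfully converting the infimum defining $C_2$ into the supremum appearing in $\epsilon(n)$, so that the strict inequality inherited from Cheng--Li--Yau passes through to give $\epsilon(n)>\frac{1}{4}\left(1+\frac{3}{\widetilde B_n}\right)\sup_{0\leq r\leq 1}\frac{2-n\ln(1-r^2)}{2-nr\ln(1-r^2)}$.
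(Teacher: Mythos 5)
Your proof is correct and follows exactly the paper's (implicit) argument: specialize Corollary \ref{cor Volume estimation of minimal hypersurface varphi2a and varphia S equiv constant} at $s=0$, rewrite $\frac{1}{4C_2}$ as the stated supremum, and chain with the Cheng--Li--Yau volume bound. Nothing is missing.
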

 Let $h(M)$ denote the Cheeger isoperimetric constant (see Definition \ref{Definition Cheeger isoperimetric constant}), we have 
 \begin{thm}\label{Cheeger isoperimetric constant}
 Let $M^n$ be a closed immersed, non-totally geodesic, minimal  hypersurface   in  $\mathbb{S}^{n+1}$. 
 Then for all  $a\in \mathbb{S}^{n+1}$ we have
 \begin{equation*}
  {\rm Vol }{\left\lbrace \varphi_a=0\right\rbrace }\geq
  \frac{2\sqrt{n+1}C_1}{C_0(n)}h(M) {\rm Vol }{\left( M^n\right) }.
 \end{equation*}
In particular, we have the following assertions:
\begin{itemize}
\item [(i)] If  $M^n$ is embedded, then  
$h(M)>\frac{-\delta(n-1)+\sqrt{\delta^2(n-1)^2+5n}}{10},$
 where  $\delta=\sqrt{\frac{S_{\max}-n}{n}};$
\item [(ii)] If the image of  $M^n$ is invariant under the antipodal map (i.e., $M^n$ is radially symmetrical), then
 $
  {\rm Vol }{\left\lbrace \varphi_a=0\right\rbrace }
  \geq \frac{1}{2}h(M){{\rm Vol }
    \left(  M^n\right)}.
 $
 \end{itemize}
 \end{thm}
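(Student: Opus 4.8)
The plan is to treat the nodal set $\{\varphi_a=0\}$ as the interface separating the two nodal domains $M^{+}=\{\varphi_a>0\}$ and $M^{-}=\{\varphi_a<0\}$ and to feed this into the definition of $h(M)$. Since $\{\varphi_a=0\}=\partial M^{+}=\partial M^{-}$, the defining property of the Cheeger constant (Definition \ref{Definition Cheeger isoperimetric constant}) gives at once
$$
{\rm Vol}\{\varphi_a=0\}\ \geq\ h(M)\,\min\{{\rm Vol}(M^{+}),{\rm Vol}(M^{-})\}.
$$
Hence the whole main inequality reduces to the balancing estimate $\min\{{\rm Vol}(M^{+}),{\rm Vol}(M^{-})\}\geq\frac{2\sqrt{n+1}\,C_1}{C_0(n)}{\rm Vol}(M)$, i.e.\ to showing that the two nodal domains of a coordinate eigenfunction cannot be too unbalanced.

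For that estimate I would exploit the orthogonal decomposition $a=\varphi_a x+\nabla\varphi_a+\psi_a\nu$, which yields $\varphi_a^2+|\nabla\varphi_a|^2+\psi_a^2=1$. Integrating this and using $\Delta\varphi_a=-n\varphi_a$ (so that $\int_M|\nabla\varphi_a|^2=n\int_M\varphi_a^2$) produces the identity
$$
(n+1)\int_M\varphi_a^2\,dV={\rm Vol}(M)-\int_M\psi_a^2\,dV,
$$
which is precisely where the factor $\sqrt{n+1}$ originates. Because $\int_M\varphi_a\,dV=0$ and $|\varphi_a|\le 1$, the quantity $\int_{M^{+}}\varphi_a=\int_{M^{-}}|\varphi_a|$ bounds $\min\{{\rm Vol}(M^{\pm})\}$ from below; controlling $\int_M\psi_a^2$ (equivalently, keeping the balance quantitative) through the isoperimetric estimate of Theorem \ref{thm Volume estimation of minimal hypersurface varphi2a and varphia}(i) at $s=0$ is what introduces $C_1$ and $C_0(n)$ and closes the bound. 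I expect this balancing step to be the main obstacle: the estimate coming from $\int_M\varphi_a=0$ alone loses too much, so the delicate point is to combine the minimal/eigenfunction structure with Theorem \ref{thm Volume estimation of minimal hypersurface varphi2a and varphia} so as to keep the constant sharp.

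Assertion (ii) is then immediate from the displayed Cheeger inequality: if the image of $M$ is invariant under the antipodal map $x\mapsto-x$, then $\varphi_a(-x)=-\varphi_a(x)$, and since the antipodal map is an isometry of $\mathbb{S}^{n+1}$ preserving $M$ it interchanges $M^{+}$ and $M^{-}$; hence ${\rm Vol}(M^{+})={\rm Vol}(M^{-})=\tfrac12{\rm Vol}(M)$, the minimum equals $\tfrac12{\rm Vol}(M)$, and the bound $ {\rm Vol}\{\varphi_a=0\}\geq\tfrac12 h(M){\rm Vol}(M)$ follows.

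For assertion (i) I would argue purely on $M$. The Gauss equation for the minimal hypersurface gives ${\rm Ric}_M(X,X)=(n-1)|X|^2-|AX|^2$; since $A$ is trace-free, Cauchy--Schwarz on its eigenvalues yields $|AX|^2\le\frac{n-1}{n}S\le\frac{n-1}{n}S_{\max}$, whence ${\rm Ric}_M\ge-(n-1)\delta^2$ with $\delta^2=\frac{S_{\max}-n}{n}$. For embedded minimal hypersurfaces the Choi--Wang estimate gives $\lambda_1(M)\ge\frac{n}{2}$ for the first nonzero eigenvalue, while Buser's inequality under the above Ricci lower bound gives $\lambda_1\le 10\,h(M)^2+2(n-1)\delta\,h(M)$. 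Combining the two produces
$$
5\,h(M)^2+(n-1)\delta\,h(M)-\tfrac{n}{4}\ \ge\ 0,
$$
and solving this quadratic — with strictness forced by $M$ being non-totally-geodesic — gives exactly the claimed lower bound $h(M)>\frac{-\delta(n-1)+\sqrt{\delta^2(n-1)^2+5n}}{10}$.
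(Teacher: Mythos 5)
Your skeleton matches the paper's proof: you reduce the main inequality, via Definition \ref{Definition Cheeger isoperimetric constant} applied to the nodal decomposition, to the balancing estimate $\min\{{\rm Vol}\{\varphi_a>0\},{\rm Vol}\{\varphi_a<0\}\}\geq\frac{2\sqrt{n+1}\,C_1}{C_0(n)}{\rm Vol}(M^n)$, and your arguments for (i) (Gauss equation, trace-free eigenvalue bound, Buser plus Choi--Wang) and (ii) (antipodal symmetry forces equal nodal volumes) are the paper's. But the balancing estimate is precisely the content of the theorem, and it is the step you explicitly leave open; the ingredients you name cannot close it. From $\int_M\varphi_a=0$ and $|\varphi_a|\leq 1$ alone you get only $\min\{{\rm Vol}\{\varphi_a\gtrless 0\}\}\geq\frac12\int_M|\varphi_a|$, which loses the factor $\sqrt{n+1}$. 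Your global identity $(n+1)\int_M\varphi_a^2={\rm Vol}(M^n)-\int_M\psi_a^2$ does not repair this: combining Cauchy--Schwarz on the smaller domain with the \emph{global} bound $\int_M\varphi_a^2\leq\frac{1}{n+1}{\rm Vol}(M^n)$ yields only ${\rm Vol}\{\varphi_a<0\}\geq\frac{(n+1)\left(\int_M|\varphi_a|\right)^2}{4\,{\rm Vol}(M^n)}$, which after Corollary \ref{cor Volume estimation of minimal hypersurface varphi2a and varphia s=0} and Lemma \ref{lem Volume estimation of minimal hypersurface} gives the constant $4(n+1)C_1^2/C_0(n)^2$, not the claimed $2\sqrt{n+1}\,C_1/C_0(n)$. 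Moreover, Theorem \ref{thm Volume estimation of minimal hypersurface varphi2a and varphia}(i) at $s=0$ bounds ${\rm Vol}\{\varphi_a=0\}$, not the integrals $\int_M|\varphi_a|$, $\int_M\varphi_a^2$ that your reduction needs, and ``controlling $\int_M\psi_a^2$'' plays no role in the actual mechanism.

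The missing idea is to localize the eigenfunction identity to the smaller nodal domain. Apply the divergence theorem to $\Delta(\varphi_a^2)=-2n\varphi_a^2+2|a^{\rm T}|^2$ on $\{\varphi_a<0\}$: the boundary term vanishes because $\nabla(\varphi_a^2)=2\varphi_a\nabla\varphi_a=0$ on $\{\varphi_a=0\}$, so $n\int_{\{\varphi_a<0\}}\varphi_a^2=\int_{\{\varphi_a<0\}}|a^{\rm T}|^2$, and then $|a^{\rm T}|^2+\varphi_a^2+\psi_a^2=1$ gives the \emph{local} bound $(n+1)\int_{\{\varphi_a<0\}}\varphi_a^2\leq{\rm Vol}\{\varphi_a<0\}$. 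Cauchy--Schwarz on that domain now yields
$$
\int_{\{\varphi_a<0\}}(-\varphi_a)\leq\sqrt{{\rm Vol}\{\varphi_a<0\}\int_{\{\varphi_a<0\}}\varphi_a^2}\leq\frac{{\rm Vol}\{\varphi_a<0\}}{\sqrt{n+1}},
$$
hence ${\rm Vol}\{\varphi_a<0\}\geq\frac{\sqrt{n+1}}{2}\int_M|\varphi_a|$, and Corollary \ref{cor Volume estimation of minimal hypersurface varphi2a and varphia s=0} ($\int_M\varphi_a^2\leq\frac{C_0(n)}{4}\int_M|\varphi_a|$) together with Lemma \ref{lem Volume estimation of minimal hypersurface} ($\int_M\varphi_a^2\geq C_1{\rm Vol}(M^n)$) produce exactly the stated constant. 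Separately, in (i) the strictness is not ``forced by $M^n$ being non-totally geodesic'': with only Choi--Wang's $\lambda_1(M)\geq n/2$ your quadratic gives a non-strict inequality; the paper gets strictness from the refinement $\lambda_1(M)>n/2$ of Choi--Wang (Brendle's survey), which you should cite instead.
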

\section{Preliminary lemmas}
In this section, we will prove Lemma \ref{lem Volume estimation of minimal hypersurface varphi2a and varphia} by Proposition  \ref{prop funda} and Lemma \ref{lem Volume estimation of minimal hypersurface}.
A direct calculation shows:
\begin{prop} \cite{Ge Li 2020, Nomizu and Smyth 1969} \label{prop funda}
For all $a\in \mathbb{S}^{n+1}$, we have
$$\begin{array}{lll}
\nabla \varphi_a=a^{\rm T},&
\nabla \psi_a =-A(a^{\rm T}),\\
\Delta \varphi_a=-n\varphi_a+nH\psi_a ,&
\Delta \psi_a =-n\left\langle \nabla H, a \right\rangle +nH\varphi_a -S\psi_a. \\
\end{array}$$
 where  $a^{\rm T}\in \Gamma(TM)$ denotes the tangent component of $a$ along $M^n$; $A$ is the shape operator with respect to $\nu$, i.e., $A(X)=-\overline\nabla_X\nu$; $S=\|A\|^2={\tr} \left( AA^t\right) $ and $H = \frac{1}{n} \tr A$ is the mean curvature.
\end{prop}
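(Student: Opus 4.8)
The plan is to treat all four identities as pointwise computations in the ambient Euclidean space $\mathbb{R}^{n+2}$, exploiting the two-step immersion $M^n\looparrowright\mathbb{S}^{n+1}\subset\mathbb{R}^{n+2}$. Writing $D$ for the flat connection of $\mathbb{R}^{n+2}$, I would first record the three structural derivatives that drive everything: $D_X x=X$ for the position vector, $\overline\nabla_X\nu=D_X\nu=-A(X)$ (the Weingarten map, where one checks $\langle D_X\nu,x\rangle=0$ so that the flat and spherical derivatives of $\nu$ agree), and the Gauss formula for the composite immersion, $D_{e_i}e_j=\nabla_{e_i}e_j+A_{ij}\nu-\delta_{ij}x$, obtained by projecting $D_{e_i}e_j$ onto the tangent space and the two normal directions $\nu$ and $x$. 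I would also fix the orthogonal decomposition of the constant vector $a$ along $M^n$, namely $a=a^{\mathrm{T}}+\psi_a\,\nu+\varphi_a\,x$.

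For the gradients I differentiate the defining functions along a tangent vector $X$. Since $a$ is constant, $X(\varphi_a)=\langle D_X x,a\rangle=\langle X,a\rangle=\langle X,a^{\mathrm{T}}\rangle$, giving $\nabla\varphi_a=a^{\mathrm{T}}$; likewise $X(\psi_a)=\langle D_X\nu,a\rangle=-\langle A(X),a\rangle=-\langle X,A(a^{\mathrm{T}})\rangle$ by symmetry of $A$, giving $\nabla\psi_a=-A(a^{\mathrm{T}})$.

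For the Laplacians I use that, for constant $a$, one has $\Delta\langle f,a\rangle=\langle\Delta f,a\rangle$, where $\Delta f$ is the componentwise (rough) Laplacian of the $\mathbb{R}^{n+2}$-valued map $f$. Applying this to $f=x$ and evaluating in a local orthonormal frame normal at the point, the Gauss formula yields $\Delta x=\sum_i D_{e_i}e_i=-nx+nH\nu$ (the $x$-component being $-n$ and the $\nu$-component $\tr A=nH$); pairing with $a$ gives $\Delta\varphi_a=-n\varphi_a+nH\psi_a$. The heart of the matter is $f=\nu$. Here $D_{e_i}\nu=-A(e_i)$ is tangent, and differentiating once more with the Gauss formula gives $\sum_i D_{e_i}(A(e_i))=\sum_{i,j}e_i(A_{ij})\,e_j-(\tr A)\,x+\|A\|^2\,\nu$.

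The main obstacle is to identify the purely tangential term $\sum_{i,j}e_i(A_{ij})e_j$, and this is exactly where the Codazzi equation enters. In a space form the shape operator is a Codazzi tensor, $(\nabla_X A)Y=(\nabla_Y A)X$, so contracting gives $\sum_i(\nabla_{e_i}A)(e_i)=\nabla(\tr A)=n\nabla H$, whence $\sum_{i,j}e_i(A_{ij})e_j=n\nabla H$. Therefore $\Delta\nu=-\sum_i D_{e_i}(A(e_i))=-n\nabla H+nHx-S\nu$, and pairing with $a$ (using $\langle x,a\rangle=\varphi_a$, $\langle\nu,a\rangle=\psi_a$, $S=\|A\|^2$, and that $\nabla H$ is tangent) produces $\Delta\psi_a=-n\langle\nabla H,a\rangle+nH\varphi_a-S\psi_a$. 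Everything else is bookkeeping of the two normal directions; the only genuinely nontrivial input is the contracted Codazzi identity converting the derivative-of-$A$ term into $n\nabla H$.
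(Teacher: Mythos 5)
Your proposal is correct: the sign conventions ($A(X)=-\overline\nabla_X\nu$, the two-step Gauss formula $D_{e_i}e_j=\nabla_{e_i}e_j+A_{ij}\nu-\delta_{ij}x$, and the contracted Codazzi identity $\sum_i(\nabla_{e_i}A)(e_i)=n\nabla H$) all check out and yield exactly the four stated identities. The paper itself gives no proof --- it cites the result as ``a direct calculation'' from \cite{Ge Li 2020, Nomizu and Smyth 1969} --- and your computation is precisely that standard direct calculation (in the spirit of Nomizu--Smyth's treatment of the Gauss map), so it fills the gap in essentially the intended way.
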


\begin{lem} \cite{Ge Li 2020} \label{lem Volume estimation of minimal hypersurface}
Let $ M^n$ be a closed immersed  minimal hypersurface  in  $\mathbb{S}^{n+1}$ with the squared length of the second fundamental form $S$.
\begin{itemize}
\item[(i)]
If $ S\not\equiv0$, then
$$
\frac{\int_{M}S}
{2n S_{\max}}
\leq
\inf_{a\in \mathbb{S}^{n+1}}\int_{M} \varphi^2_a.
$$
The equality holds if and only if $ S\equiv n$ and $M^n$  is the minimal Clifford torus $S^{1}(\sqrt{\frac{1}{n}})\times S^{n-1}(\sqrt{\frac{n-1}{n}})$.
\item[(ii)] If $ S$ has no restrictions, then
$$
\frac{n}{4n^2-3n+1} 
\left( {\int_{M} }S\right) ^2
\leq \int_{M} S^2
\inf_{a\in \mathbb{S}^{n+1}}\int_{M} \varphi^2_a.
$$
The equality holds if and only if $M^n$ is an equator.
\end{itemize}
\end{lem}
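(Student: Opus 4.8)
The plan is to reduce both parts to a pointwise-in-$a$ lower bound for $\int_M\varphi_a^2$, since the infimum over $a\in\mathbb S^{n+1}$ then follows immediately. Throughout I use minimality ($H=0$), so Proposition \ref{prop funda} gives $\Delta\varphi_a=-n\varphi_a$, $\Delta\psi_a=-S\psi_a$, together with $\nabla\varphi_a=a^{\rm T}$ and $\nabla\psi_a=-A(a^{\rm T})$. Three exact relations will be used repeatedly: the orthogonal decomposition $a=a^{\rm T}+\psi_a\nu+\varphi_a x$ of the unit vector $a$ gives the pointwise identity $\varphi_a^2+\psi_a^2+|a^{\rm T}|^2=1$; and integration by parts against the two eigenfunction equations gives $\int_M|a^{\rm T}|^2=\int_M|\nabla\varphi_a|^2=n\int_M\varphi_a^2$ and $\int_M|A(a^{\rm T})|^2=\int_M|\nabla\psi_a|^2=\int_M S\psi_a^2$.

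The crucial ingredient, and the step I expect to be the main obstacle, is a sharp pointwise control of the $\psi_a$–term by the $|a^{\rm T}|$–term. Let $\lambda_1,\dots,\lambda_n$ be the principal curvatures, with $\lambda_1^2$ maximal, and recall minimality forces $\lambda_1=-\sum_{i\ge2}\lambda_i$. Cauchy–Schwarz then gives $\lambda_1^2\le(n-1)\sum_{i\ge2}\lambda_i^2=(n-1)(S-\lambda_1^2)$, hence $\lambda_1^2\le\frac{n-1}{n}S$. Consequently $|A(a^{\rm T})|^2\le\lambda_1^2|a^{\rm T}|^2\le\frac{n-1}{n}S|a^{\rm T}|^2$ pointwise, and after integration $\int_M S\psi_a^2=\int_M|A(a^{\rm T})|^2\le\frac{n-1}{n}\int_M S|a^{\rm T}|^2$. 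It is precisely the factor $\frac{n-1}{n}$, rather than the naive $1$, that yields the sharp constants; its equality case $\lambda_1^2=\frac{n-1}{n}S$ forces $\lambda_2=\dots=\lambda_n$, which will drive the rigidity statement.

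For (i) I split the total curvature through the pointwise identity and bound each piece by $S_{\max}$, using $\int_M\varphi_a^2=\frac1n\int_M|a^{\rm T}|^2$ for the first term and the estimate above for the second:
\[
\int_M S=\int_M S\varphi_a^2+\int_M S\psi_a^2+\int_M S|a^{\rm T}|^2
\le\Big(\tfrac1n+\tfrac{n-1}{n}+1\Big)S_{\max}\int_M|a^{\rm T}|^2
=2S_{\max}\int_M|a^{\rm T}|^2 .
\]
Since $\int_M|a^{\rm T}|^2=n\int_M\varphi_a^2$, this is exactly $\int_M S\le 2nS_{\max}\int_M\varphi_a^2$. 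Equality forces $S\equiv S_{\max}$ and $\lambda_1^2\equiv\frac{n-1}{n}S$, i.e. constant scalar curvature with two distinct principal curvatures one of which is simple; by the Chern–do Carmo–Kobayashi classification this is the minimal Clifford torus $S^1(\sqrt{1/n})\times S^{n-1}(\sqrt{(n-1)/n})$, for which $S\equiv n$.

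For (ii) I first absorb the $\psi_a$–term to obtain $\int_M S\le\int_M S f$ with $f:=\varphi_a^2+\frac{2n-1}{n}|a^{\rm T}|^2$ (the defect equals $\frac{n-1}{n}\int_M S|a^{\rm T}|^2-\int_M S\psi_a^2\ge0$), and then apply Cauchy–Schwarz $\int_M Sf\le\big(\int_M S^2\big)^{1/2}\big(\int_M f^2\big)^{1/2}$. It remains to show $\int_M f^2\le\frac{4n^2-3n+1}{n}\int_M\varphi_a^2$. Expanding $f^2$ and using $\varphi_a^2\le1-|a^{\rm T}|^2$ and $|a^{\rm T}|^2\le1-\varphi_a^2$ (both from $\psi_a^2\ge0$) to estimate $\int_M\varphi_a^4\le\int_M\varphi_a^2-\int_M\varphi_a^2|a^{\rm T}|^2$ and $\int_M|a^{\rm T}|^4\le n\int_M\varphi_a^2-\int_M\varphi_a^2|a^{\rm T}|^2$, the cross term $\int_M\varphi_a^2|a^{\rm T}|^2$ enters with the nonpositive coefficient $-\big(\frac{2n-1}{n}-1\big)^2$ and may be discarded, leaving $\int_M f^2\le\big(1+\frac{(2n-1)^2}{n}\big)\int_M\varphi_a^2=\frac{4n^2-3n+1}{n}\int_M\varphi_a^2$. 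Squaring the Cauchy–Schwarz inequality gives the claim. The delicate point here is the equality discussion: equality would require simultaneously $S\propto f$, the curvature bound, and $\int_M\varphi_a^2|a^{\rm T}|^2=0$, which cannot hold for a non-totally-geodesic immersion, so equality forces $S\equiv0$, i.e. $M^n$ is an equator.
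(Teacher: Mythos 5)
Your proposal is correct, but there is no in-paper proof to compare it with: the paper states this lemma as a quotation from \cite{Ge Li 2020} and never proves it (Section 2 only uses it as an ingredient for Lemma \ref{lem Volume estimation of minimal hypersurface varphi2a and varphia}). So your argument is a self-contained reconstruction, and it checks out. The three identities you rely on, namely $\varphi_a^2+\psi_a^2+|a^{\rm T}|^2=1$, $\int_M|a^{\rm T}|^2=n\int_M\varphi_a^2$ and $\int_M|A(a^{\rm T})|^2=\int_M S\psi_a^2$, are exactly what Proposition \ref{prop funda} gives when $H=0$, and the pointwise bound $\lambda_1^2\le\frac{n-1}{n}S$ for a trace-free shape operator is the standard sharp estimate. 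The arithmetic closes in both parts: in (i), $\bigl(\tfrac1n+\tfrac{n-1}{n}+1\bigr)=2$ yields $\int_M S\le 2nS_{\max}\int_M\varphi_a^2$ for every $a$; in (ii), the cross term $\int_M\varphi_a^2|a^{\rm T}|^2$ indeed appears with coefficient $-\bigl(\tfrac{n-1}{n}\bigr)^2\le0$ and the surviving coefficient is $1+\tfrac{(2n-1)^2}{n}=\tfrac{4n^2-3n+1}{n}$, so Cauchy--Schwarz finishes the proof. Two points are sketched rather than fully argued, though both are repairable and do not affect the method. First, in the equality discussions you implicitly need that the infimum over $a$ is attained (compactness of $\mathbb{S}^{n+1}$) and that the sets $\{\varphi_a\ne0\}$ and $\{a^{\rm T}\ne0\}$ have full measure; this uses unique continuation for the eigenfunction $\varphi_a$, together with the observation that $\varphi_a\equiv0$ would force $M^n$ to be an equator, hence $S\equiv0$. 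Second, the rigidity in (i) does not literally follow from the Chern--do Carmo--Kobayashi theorem (which presupposes $S\equiv n$): what your equality conditions give is that $S$ is constant and the principal curvatures are $\lambda_1$ simple with $\lambda_2=\cdots=\lambda_n$, hence constant; so $M^n$ is isoparametric with two distinct principal curvatures of multiplicities $(1,n-1)$, and the classical classification of such hypersurfaces plus minimality identifies it as $S^{1}(\sqrt{1/n})\times S^{n-1}(\sqrt{(n-1)/n})$, after which $S\equiv n$ follows.
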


\begin{lem}\label{lem Volume estimation of minimal hypersurface varphi2a and varphia}
Let $ M^n$ be a closed immersed, non-totally geodesic,  minimal hypersurface   in  $\mathbb{S}^{n+1}$. Then for all $0 \leq s\leq r\leq 1$ and $a\in \mathbb{S}^{n+1}$, the following inequality holds:
$$
\int_{\left\lbrace |\varphi_a|\geq s\right\rbrace}\varphi^2_a\leq
\frac{2+nr\ln\left(\frac{1-s^2}{1-r^2} \right)}{2+n\ln\left(\frac{1-s^2}{1-r^2} \right)  }
\int_{\left\lbrace |\varphi_a|\geq s\right\rbrace}|\varphi_a|.
$$
\end{lem}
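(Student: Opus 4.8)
The plan is to reduce the statement to a one-dimensional differential inequality for the distribution of $\varphi_a$ and then integrate it with a logarithmic integrating factor. Write $w = |\varphi_a|$. Since $M^n$ is minimal we have $H\equiv 0$, so Proposition \ref{prop funda} gives $\Delta\varphi_a = -n\varphi_a$, and hence $\Delta w = -nw$ away from the nodal set $\{\varphi_a = 0\}$. Moreover, decomposing the unit vector $a$ along the orthogonal splitting $\mathbb{R}^{n+2} = T_pM\oplus\mathbb{R}\nu\oplus\mathbb{R}x$ yields $a = a^{\mathrm{T}} + \psi_a\nu + \varphi_a x$, whence $1 = |a^{\mathrm{T}}|^2 + \psi_a^2 + \varphi_a^2$ and therefore the pointwise gradient bound $|\nabla w|^2 = |a^{\mathrm{T}}|^2 = 1 - \varphi_a^2 - \psi_a^2 \le 1 - w^2$. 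These two facts—the eigenfunction equation and the gradient bound—are the only geometric inputs; everything afterward is analysis on the super-level sets.

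Introduce, for $t\in[0,1]$, $A(t) = \mathrm{Vol}\{w\ge t\}$, $I(t) = \int_{\{w\ge t\}}w$ and $J(t) = \int_{\{w\ge t\}}w^2$; the asserted inequality is exactly $J(s)\le C\,I(s)$ with $C = \frac{2+nr\ln((1-s^2)/(1-r^2))}{2+n\ln((1-s^2)/(1-r^2))}$. On $\Omega_t = \{w\ge t\}$ (for $t>0$, so that $\Omega_t$ avoids the nodal set and $w$ is smooth), the outward unit normal to $\partial\Omega_t = \{w=t\}$ is $-\nabla w/|\nabla w|$. Applying the divergence theorem to $\nabla w$ and to $w\nabla w$ and using $\Delta w = -nw$ I obtain the two exact identities $\int_{\{w=t\}}|\nabla w| = nI(t)$ and $\int_{\Omega_t}|\nabla w|^2 = nJ(t) - ntI(t)$. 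Feeding the gradient bound $|\nabla w|^2\le 1-w^2$ into the second identity gives the key level-wise comparison $(n+1)J(t)\le A(t) + ntI(t)$ for every $t$. The coarea formula supplies the companion differential identities $I'(t) = tA'(t)$ and $J'(t) = tI'(t)$, together with $A(1)=I(1)=J(1)=0$.

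With these in hand I would eliminate $A$ and integrate across the band $s\le t\le r$. Setting $R(t) = A(t) - (n+1)J(t) + ntI(t)\ge 0$ and differentiating, the identities $I'=tA'$, $J'=tI'$ collapse the derivative to $R'(t) = \frac{1-t^2}{t}I'(t) + nI(t)$, a first-order relation whose natural integrating factor is $(1-t^2)^{-n/2}$; indeed $\frac{d}{dt}\big[I(t)(1-t^2)^{-n/2}\big] = (1-t^2)^{-n/2}\big(I'(t)+\tfrac{nt}{1-t^2}I(t)\big)$, and the logarithm in the statement is produced by $\int_s^r\frac{2t}{1-t^2}\,dt = \ln\frac{1-s^2}{1-r^2}$. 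Integrating the comparison against this factor over $[s,r]$, reinserting $J'=tI'$ to re-express the boundary contributions at $t=s$ and $t=r$ purely through $I(s)$ and $J(s)$, and discarding the nonnegative slack, should collapse to $(2+nL)J(s)\le (2+nrL)I(s)$ with $L = \ln\frac{1-s^2}{1-r^2}$, which is the claim; the case $s=0$ follows by letting $s\to 0^+$, and $r=s$ reduces to the trivial $J(s)\le I(s)$.

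The main obstacle is precisely this last step: matching the constants exactly. The comparison $(n+1)J\le A+ntI$ is an inequality, not an identity, so I must keep careful track of the nonnegative slack $R(t)$ and of the boundary terms generated at both ends of $[s,r]$, and show that after integration exactly the coefficient $\frac{2+nrL}{2+nL}$ survives rather than a weaker constant. It is here that the free parameter $r$ is essential—each $r\in[s,1]$ yields one admissible bound, and only after taking the infimum over $r$ (the quantity $C_2$ of the introduction) does one recover the sharp form used later. A secondary point to verify is that $\{w=1\}$ and the nodal set contribute no spurious boundary terms, i.e.\ that $A(1)=0$ and that the $t\to 0^+$ limit is harmless.
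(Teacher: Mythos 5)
Your identities are all correct: $\int_{\{w=t\}}|\nabla w|=nI(t)$, $\int_{\{w\ge t\}}|\nabla w|^2=nJ(t)-ntI(t)$, hence $(n+1)J(t)\le A(t)+ntI(t)$, together with the coarea relations $I'=tA'$, $J'=tI'$ and the vanishing boundary values at $t=1$. But the step you yourself flag as the ``main obstacle'' is a genuine gap, not bookkeeping, and it cannot be closed from the ingredients you have retained. The reason is that your key comparison is exactly the $[t,1]$-integral of the inequality the proof actually needs. Indeed, with $R(t)=A(t)-(n+1)J(t)+ntI(t)$ your own computation gives $R'(t)=\frac{1-t^2}{t}I'(t)+nI(t)$, so $R\ge 0$ together with $R(1)=0$ says only that $\int_t^1\left[\frac{1-\tau^2}{\tau}\left(-I'(\tau)\right)-nI(\tau)\right]d\tau\ge 0$ for every $t$, i.e.\ the quantity $P(t)=\frac{1-t^2}{t}\left(-I'(t)\right)-nI(t)$ is nonnegative \emph{on average over tails}. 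The paper proves the \emph{pointwise} statement $P(t)\ge 0$, i.e.\ $-I'(t)\ge\frac{nt}{1-t^2}I(t)$ (equivalently $R'\le 0$), by applying the bound $|a^{\rm T}|\le\sqrt{1-t^2}$ on each level set $\{|\varphi_a|=t\}$ \emph{inside} the coarea formula (see \eqref{equation Stokess formula to varphia} and \eqref{equation leqr to geqr varphia}), rather than integrating it over the superlevel set $\Omega_t$ as you do; that integration is precisely where the pointwise information is lost. With only $R\ge0$, your integrating-factor identity $\frac{d}{dt}\left[I(t)(1-t^2)^{-n/2}\right]=\frac{t}{(1-t^2)^{1+n/2}}\,R'(t)$ gives nothing, because $R'$ need not be $\le 0$, and after integration by parts the slack terms enter with the wrong sign and cannot be discarded.

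That this is not reparable by cleverer bookkeeping is shown by a test distribution: let $w$ take the single value $b_0=1/\sqrt{n+1}$ on a set of measure $m$ and the value $0$ elsewhere. Then $A,I,J$ satisfy every fact in your list ($R(t)=mnb_0t\ge0$ for $t\le b_0$, $R\equiv0$ above $b_0$, coarea identities hold, boundary values vanish), yet $I$ is constant on $(0,b_0]$, so $I(t)(1-t^2)^{-n/2}$ is strictly \emph{increasing} there and no decay of $I$ --- in particular nothing like $I(r)\le\frac{2}{2+nL}I(s)$ with $L=\ln\frac{1-s^2}{1-r^2}$ --- can be deduced from those facts alone. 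So any completion of your plan must either (i) go back and prove the pointwise inequality $-I'\ge\frac{nt}{1-t^2}I$ via coarea on level sets, exactly as the paper does, after which your integrating factor even yields the stronger decay $I(r)\le I(s)\left(\frac{1-r^2}{1-s^2}\right)^{n/2}$; or (ii) solve a genuinely different extremal problem for measures subject only to the averaged constraint, which is far beyond the sketch. Note finally that even granting the decay of $I$, you still need the elementary estimate $J(s)\le rI(s)+(1-r)I(r)$ (the paper's \eqref{equation leqr to  varphia2 1-r}, from $w^2\le rw$ on $\{s\le w<r\}$ and $w^2\le w$ on $\{w\ge r\}$) and the convex-combination step that produces the exact coefficient $\frac{2+nrL}{2+nL}$; both are absent from your outline, and the boundary terms of your integration by parts, which involve $A(s)$ and $A(r)$ through $R(s)$ and $R(r)$, are not a substitute for them.
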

\begin{proof}
By Proposition \ref{prop funda}, we have
$$\nabla \varphi_a=a^{\rm T},\ \ \Delta \varphi_a=-n\varphi_a,$$
for all $a\in \mathbb{S}^{n+1}$.
Hence, by the divergence theorem and
\begin{equation}\label{equation a2 psi2 vaephi2}
|a^{\rm T}|^2+\varphi_a^2+\psi_a^2=1,
\end{equation}
for all $ 0<t\leq 1$ one has
\begin{equation}\label{equation Stokess formula to varphia}
\int_{\left\lbrace |\varphi_a|\geq t\right\rbrace }
|\varphi_a|=
\int_{\left\lbrace |\varphi_a|= t\right\rbrace}\frac{|a^{\rm T}|}{n}=
\int_{\left\lbrace |\varphi_a|= t\right\rbrace}\frac{\sqrt{1-\varphi_a^2-\psi_a^2}}{n}\leq
\int_{\left\lbrace |\varphi_a|= t\right\rbrace}\frac{\sqrt{1-t^2}}{n},
\end{equation}
where ${\left\lbrace |\varphi_a|\geq t\right\rbrace }={\left \lbrace x\in M^n: |\varphi_a|\geq t\right\rbrace }$ and  ${\left\lbrace |\varphi_a|= t\right\rbrace }={\left \lbrace x\in M^n: |\varphi_a|= t\right\rbrace }$.
Due to the co-area formula and (\ref{equation a2 psi2 vaephi2}, \ref{equation Stokess formula to varphia}), for all $0 \leq s< r\leq 1$ we obtain
\begin{equation}\label{equation leqr to geqr varphia}
\begin{aligned}
\int_{\left\lbrace s\leq|\varphi_a| \leq r\right\rbrace }
|\varphi_a|
&=
\int_s^r
\int_{\left\lbrace |\varphi_a|=t\right\rbrace }
\frac{|\varphi_a|}{|a^{\rm T}|}
=
\int_s^r
\int_{\left\lbrace |\varphi_a|=t\right\rbrace }
\frac{|\varphi_a|}{\sqrt{1-\varphi_a^2-\psi_a^2}}\\
&\geq
\int_s^r
\int_{\left\lbrace |\varphi_a|=t\right\rbrace }
\frac{t}{\sqrt{1-t^2}}
\geq
\int_s^r
\int_{\left\lbrace |\varphi_a|\geq t\right\rbrace }
\frac{t}{\sqrt{1-t^2}}
\frac{n}{\sqrt{1-t^2}}
|\varphi_a|\\
&=
\int_s^r
\int_{\left\lbrace |\varphi_a|\geq t\right\rbrace }
\frac{nt}{{1-t^2}}
|\varphi_a|
\geq
\int_{\left\lbrace |\varphi_a|\geq r\right\rbrace }
|\varphi_a|
\int_s^r\frac{nt}{{1-t^2}}\\
&=\frac{n}{2}\ln\left(\frac{1-s^2}{1-r^2} \right)
\int_{\left\lbrace |\varphi_a|\geq r\right\rbrace }
|\varphi_a|.
\end{aligned}
\end{equation}
For all $0 \leq s< r\leq 1$,  by $0\leq\varphi_a^2\leq|\varphi_a|\leq1$ we have
\begin{equation}\label{equation leqr to  varphia2 1-r}
\begin{aligned}
\int_{\left\lbrace |\varphi_a|\geq s\right\rbrace }\varphi_a^2
&=
\int_{\left\lbrace |\varphi_a|\geq r\right\rbrace }
\varphi_a^2+
\int_{\left\lbrace s\leq |\varphi_a|< r\right\rbrace }
\varphi_a^2\\
&\leq
\int_{\left\lbrace |\varphi_a|\geq r\right\rbrace }
\varphi_a^2+
\int_{\left\lbrace s\leq  |\varphi_a|<  r\right\rbrace }
r|\varphi_a|\\
&=\int_{\left\lbrace |\varphi_a|\geq r\right\rbrace }
\varphi_a^2+r\int_{\left\lbrace |\varphi_a|\geq s\right\rbrace }|\varphi_a|-r\int_{\left\lbrace |\varphi_a|\geq r\right\rbrace }|\varphi_a|\\
&\leq
\left( 1-r\right) \int_{\left\lbrace |\varphi_a|\geq r\right\rbrace } \varphi_a^2+
 r \int_{\left\lbrace |\varphi_a|\geq s\right\rbrace }|\varphi_a|\\
 &\leq
 \left( 1-r\right) \int_{\left\lbrace |\varphi_a|\geq r\right\rbrace }|\varphi_a|+
  r \int_{\left\lbrace |\varphi_a|\geq s\right\rbrace }|\varphi_a|.
\end{aligned}
\end{equation}
Thus, for all $0\leq s, r,u\leq 1$ and $s<r$,  by (\ref{equation leqr to geqr varphia}) and (\ref{equation leqr to  varphia2 1-r}) we have
\begin{eqnarray}\label{equation leqr to  estimate varphia2 r}
&~&\int_{\left\lbrace |\varphi_a|\geq s\right\rbrace }\varphi_a^2\\
&\leq&
   r \int_{\left\lbrace |\varphi_a|\geq s\right\rbrace }|\varphi_a|+
 \left( 1-r\right) \int_{\left\lbrace |\varphi_a|\geq r\right\rbrace }|\varphi_a|\nonumber\\
&=&   r \int_{\left\lbrace |\varphi_a|\geq s\right\rbrace }|\varphi_a|+
 (1-r)\left[ u\int_{\left\lbrace |\varphi_a|\geq r\right\rbrace }|\varphi_a|+(1-u) \int_{\left\lbrace |\varphi_a|\geq r\right\rbrace }|\varphi_a|\right] \nonumber\\
&\leq&
    r \int_{\left\lbrace |\varphi_a|\geq s\right\rbrace }|\varphi_a|+
 (1-r)\left[
 \frac{2u\int_{\left\lbrace s\leq |\varphi_a|\leq r\right\rbrace }|\varphi_a|}
 {n\ln\left(\frac{1-s^2}{1-r^2} \right) }
 +(1-u) \int_{\left\lbrace |\varphi_a|\geq r\right\rbrace }|\varphi_a|\right]. \nonumber
\end{eqnarray}
Choosing
$$
\frac{2u_0}{n\ln\left(\frac{1-s^2}{1-r^2} \right) }=
1-u_0,
$$
we have
\begin{equation}\label{equation choose s to  estimate varphia2 }
u_0=\frac{n\ln\left(\frac{1-s^2}{1-r^2} \right)}{2+n\ln\left(\frac{1-s^2}{1-r^2} \right)  }.
\end{equation}
Hence, by (\ref{equation leqr to  estimate varphia2 r}) and (\ref{equation choose s to  estimate varphia2 }) we have
\begin{equation*}\label{equation leqr to  estimate varphia2 rln1-r2}
\begin{aligned}
\int_{\left\lbrace |\varphi_a|\geq s\right\rbrace }\varphi_a^2
  &\leq
    r \int_{\left\lbrace |\varphi_a|\geq s\right\rbrace }|\varphi_a|+
 (1-r) (1-u_0)\left(
 \int_{\left\lbrace s\leq  |\varphi_a|\leq r\right\rbrace }|\varphi_a|+  \int_{\left\lbrace  |\varphi_a|\geq r\right\rbrace }|\varphi_a|
\right) \\
&=\left[  r+(1-r)(1-u_0)\right]  \int_{\left\lbrace |\varphi_a|\geq s\right\rbrace}|\varphi_a|\\
&=\frac{2+nr\ln\left(\frac{1-s^2}{1-r^2} \right)}{2+n\ln\left(\frac{1-s^2}{1-r^2} \right)  }
\int_{\left\lbrace |\varphi_a|\geq s\right\rbrace}|\varphi_a|.
\end{aligned}
\end{equation*}
\end{proof}
In particular, setting $s=0$ in Lemma \ref{lem Volume estimation of minimal hypersurface varphi2a and varphia}, we obtain
\begin{cor}\label{cor Volume estimation of minimal hypersurface varphi2a and varphia s=0}
Let $ M^n$ be a closed immersed, non-totally geodesic,  minimal hypersurface    in  $\mathbb{S}^{n+1}$. Then for all  $a\in \mathbb{S}^{n+1}$, the following inequality holds:
$$
\int_{M}\varphi^2_a\leq
\frac{C_0(n)}{4}\int_{M}|\varphi_a|,
$$
where $C_0(n)=4\inf_{0\leq r\leq 1}
\frac{2-nr\ln\left(1-r^2 \right)}{2-n\ln\left(1-r^2 \right)  }$.
\end{cor}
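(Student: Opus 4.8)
The plan is to directly specialize Lemma \ref{lem Volume estimation of minimal hypersurface varphi2a and varphia} to the boundary case $s=0$. First I would observe that when $s=0$ the superlevel set $\left\lbrace |\varphi_a|\geq 0\right\rbrace$ coincides with the entire manifold $M^n$, so both integrals appearing in the lemma collapse to integrals over all of $M$. Next I would simplify the logarithmic factor: since $\frac{1-s^2}{1-r^2}=\frac{1}{1-r^2}$ when $s=0$, we have $\ln\left(\frac{1-s^2}{1-r^2}\right)=-\ln(1-r^2)$, which turns the coefficient in the lemma into $\frac{2-nr\ln(1-r^2)}{2-n\ln(1-r^2)}$. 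This gives, for every fixed $r\in[0,1]$, the estimate $\int_M\varphi_a^2\leq \frac{2-nr\ln(1-r^2)}{2-n\ln(1-r^2)}\int_M|\varphi_a|$.

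The key point is that this inequality holds for \emph{every} admissible $r$, while the left-hand side is manifestly independent of $r$. I would therefore take the infimum of the right-hand coefficient over $r\in[0,1]$, which by definition equals $C_0(n)/4$, immediately yielding the claimed bound $\int_M\varphi_a^2\leq \frac{C_0(n)}{4}\int_M|\varphi_a|$. All the analytic work has already been done inside the lemma, so this step is purely an optimization over the single parameter $r$.

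The only technical care needed is to confirm that the infimum is well-defined and genuinely informative. I would check that the coefficient is continuous and bounded on $(0,1)$ and compute its two boundary values: at $r=0$ the ratio is $\frac{2}{2}=1$, and as $r\to 1^-$ the dominant terms $-nr\ln(1-r^2)$ and $-n\ln(1-r^2)$ force the ratio toward $r\to 1$. Hence the coefficient equals $1$ at both ends and its infimum over $[0,1]$ is attained in the interior and is at most $1$; this verifies that the corollary strictly sharpens the trivial estimate $\int_M\varphi_a^2\leq\int_M|\varphi_a|$ coming from $|\varphi_a|\leq 1$. Since every step is an elementary substitution followed by a one-parameter minimization, I do not anticipate any real obstacle—the entire content is inherited from Lemma \ref{lem Volume estimation of minimal hypersurface varphi2a and varphia}.
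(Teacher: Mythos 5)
Your proposal is correct and is exactly the paper's own argument: the corollary is obtained by setting $s=0$ in Lemma \ref{lem Volume estimation of minimal hypersurface varphi2a and varphia}, noting that $\left\lbrace |\varphi_a|\geq 0\right\rbrace=M$ and $\ln\bigl(\tfrac{1}{1-r^2}\bigr)=-\ln(1-r^2)$, and then taking the infimum of the resulting coefficient over $r\in[0,1]$, which is precisely $C_0(n)/4$. Your additional check that the coefficient equals $1$ at both endpoints and dips strictly below $1$ in the interior is a correct (if optional) sanity check that the bound is nontrivial.
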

\section{Proof of  Theorem \ref{thm Volume estimation of minimal hypersurface varphi2a and varphia}}
In this section, we will  prove Theorem \ref{thm Volume estimation of minimal hypersurface varphi2a and varphia} by Lemma \ref{lem Volume estimation of minimal hypersurface}  and Lemma  \ref{lem Volume estimation of minimal hypersurface varphi2a and varphia}.

\begin{proof}[\textbf{Proof of Theorem $\mathbf{\ref{thm Volume estimation of minimal hypersurface varphi2a and varphia}}$}]
\textbf{Case $({\rm i})$.}
Since  $ M^n$ is a closed  minimal hypersurface $($non-totally geodesic$)$   in  $\mathbb{S}^{n+1}$,  by Lemma \ref{lem Volume estimation of minimal hypersurface} we have
\begin{equation}\label{equation varphi2 C1}
\inf_{a\in \mathbb{S}^{n+1}}\int_{M} \varphi^2_a
\geq C_1 {\rm Vol }\left( M^n \right),
\end{equation}
where $C_1=\max\{\theta_1,\theta_2\}$ and
$$
\theta_1=\frac{\int_{M}S}
{2n S_{\max}{\rm Vol }\left( M^n \right)},\ \
\theta_2=\frac{n}{4n^2-3n+1}
 \frac{
 \left( {\int_{M} }S\right) ^2}
{ {\rm Vol }\left( M^n \right) \int_{M} S^2}.
$$
On  one hand,
if $C_1\geq s^2$, then (\ref{equation varphi2 C1}) shows
\begin{equation}\label{equation estimate  varphia2 C1r2}
\begin{aligned}
\int_{\left\lbrace |\varphi_a|\geq s\right\rbrace }
\varphi_a^2
&=
\int_{M}\varphi_a^2-\int_{\left\lbrace |\varphi_a|< s\right\rbrace }
\varphi_a^2\\
&\geq\int_{M}C_1-\int_{\left\lbrace |\varphi_a|< s\right\rbrace }
s^2\\
&=\int_{\left\lbrace |\varphi_a|\geq s\right\rbrace }C_1+\int_{\left\lbrace |\varphi_a|< s\right\rbrace }
\left( C_1-s^2\right) \\
&\geq\int_{\left\lbrace |\varphi_a|\geq s\right\rbrace }C_1.
\end{aligned}
\end{equation}
By Lemma \ref{lem Volume estimation of minimal hypersurface varphi2a and varphia},   (\ref{equation Stokess formula to varphia}) and (\ref{equation estimate  varphia2 C1r2}) , we obtain
$$
\int_{\left\lbrace |\varphi_a|\geq s\right\rbrace }C_1
\leq
\int_{\left\lbrace |\varphi_a|\geq s\right\rbrace }
\varphi_a^2
\leq
C_2
\int_{\left\lbrace |\varphi_a|\geq s\right\rbrace }|\varphi_a|
\leq
C_2
\int_{\left\lbrace |\varphi_a|= s\right\rbrace}\frac{\sqrt{1-s^2}}{n},
$$
where $C_2=\inf_{s\leq r\leq 1}\frac{2+nr\ln\left(\frac{1-s^2}{1-r^2} \right)}{2+n\ln\left(\frac{1-s^2}{1-r^2} \right)  }$.
Thus
\begin{equation}\label{equation estimate  varphia2 C1geq s2}
 {\rm Vol }{\left\lbrace |\varphi_a|=s\right\rbrace }\geq
\frac{nC_1}{C_2\sqrt{1-s^2}}
{\rm Vol }{\left\lbrace |\varphi_a|\geq s\right\rbrace }\ \
\left( \sqrt{C_1}\geq s> 0\right).
\end{equation}
In particular, if $s=0$, then
$$
\begin{aligned}
 \lim_{s\to 0^+} {\rm Vol }{\left\lbrace |\varphi_a|=s\right\rbrace }
 &=
  \lim_{s\to 0^+}  {\rm Vol }{\left\lbrace \varphi_a=s\right\rbrace }
  +  \lim_{s\to 0^+} {\rm Vol }{\left\lbrace \varphi_a=-s\right\rbrace }\\
& 
=2{\rm Vol }{\left\lbrace \varphi_a=0\right\rbrace },
 \end{aligned}
$$
and
$$
 \lim_{s\to 0^+} {\rm Vol }{\left\lbrace |\varphi_a|\geq s\right\rbrace }=
 {\rm Vol }{\left\lbrace |\varphi_a|\geq 0\right\rbrace }=
 {\rm Vol }
\left(  M^n\right).
$$
By (\ref{equation estimate  varphia2 C1geq s2}), one has
\begin{equation}\label{equation estimate  varphia2 s=0}
 {\rm Vol }{\left\lbrace \varphi_a=0\right\rbrace }\geq
\frac{nC_1}{2C_2}
{\rm Vol }{\left\lbrace |\varphi_a|\geq 0\right\rbrace }=
\frac{nC_1}{2C_2}
{\rm Vol }
\left(  M^n\right).
\end{equation}
On the other hand,
by (\ref{equation Stokess formula to varphia}), we have
$$
\int_{\left\lbrace |\varphi_a|\geq s\right\rbrace }s
\leq
\int_{\left\lbrace |\varphi_a|\geq s\right\rbrace }
|\varphi_a|
\leq
\int_{\left\lbrace |\varphi_a|= s\right\rbrace}\frac{\sqrt{1-s^2}}{n}\ \
 \left( 1>s>0 \right).
$$
Hence
\begin{equation}\label{equation estimate  varphia2 C1leq s2}
 {\rm Vol }{\left\lbrace |\varphi_a|=s\right\rbrace }\geq
\frac{ns}{\sqrt{1-s^2}}
{\rm Vol }{\left\lbrace |\varphi_a|\geq s\right\rbrace }\ \
\left( 1> s>0\right).
\end{equation}
Choose
 $$
\frac{ns}{\sqrt{1-s^2}}=\frac{nC_1}{C_2\sqrt{1-s^2}},
$$
which implies that $s=\frac{C_1}{C_2}$. Then
we have the following discussions:
\begin{itemize}
\item [(1)] If $s=0$, (\ref{equation estimate  varphia2 s=0})  implies
 $${\rm Vol }{\left\lbrace \varphi_a=0\right\rbrace }\geq
\frac{nC_1}{2C_2}
{\rm Vol }{\left\lbrace |\varphi_a|\geq 0\right\rbrace }=
\frac{nC_1}{2C_2}
{\rm Vol }
\left(  M^n\right);$$
\item [(2)] If $0<s\leq \min\left\lbrace \sqrt{C_1},\frac{C_1}{C_2}\right\rbrace $, (\ref{equation estimate  varphia2 C1geq s2}) implies
$$
 {\rm Vol }{\left\lbrace |\varphi_a|=s\right\rbrace }\geq
\frac{nC_1}{C_2\sqrt{1-s^2}}
{\rm Vol }{\left\lbrace |\varphi_a|\geq s\right\rbrace };
$$
\item [(3)]  If $\min\left\lbrace \sqrt{C_1},\frac{C_1}{C_2}\right\rbrace<s< 1$, (\ref{equation estimate  varphia2 C1leq s2})
implies
$$
{\rm Vol }{\left\lbrace |\varphi_a|=s\right\rbrace }\geq
\frac{ns}{\sqrt{1-s^2}}
{\rm Vol }{\left\lbrace |\varphi_a|\geq s\right\rbrace }.
$$
\end{itemize}
\textbf{Case $({\rm ii})$.}
By Proposition \ref{prop funda}, we have
$$\nabla \varphi_a=a^{\rm T},\ \   \Delta \varphi_a=-n\varphi_a,$$
for all $a\in \mathbb{S}^{n+1}$.
Hence, by the divergence theorem and $S\not\equiv 0$, 
one has
\begin{equation*}\label{equation Stokess formula to varphia s=0}
\int_{M }
|\varphi_a|
=\int_{\left\lbrace \varphi_a>0\right\rbrace}
\varphi_a-\int_{\left\lbrace \varphi_a\leq 0\right\rbrace}
\varphi_a=
\int_{\left\lbrace |\varphi_a|= 0\right\rbrace}\frac{2|a^{\rm T}|}{n}. 
\end{equation*}
Since
$$
\int_{a\in\mathbb{S}^{n+1}}|\varphi_a|
=2{\rm Vol}\left(\mathbb{B}^{n+1} \right)=
\frac{2}{n+1}{\rm Vol}\left(\mathbb{S}^{n} \right),
$$
we have
$$
\frac{2}{n+1}{\rm Vol}\left(\mathbb{S}^{n} \right){\rm Vol}(M^n)=\int_{a\in\mathbb{S}^{n+1}}
\int_{x\in M}
|\varphi_a|=\int_{a\in\mathbb{S}^{n+1}}\int_{\left\lbrace |\varphi_a|= 0\right\rbrace}\frac{2|a^{\rm T}|}{n}.
$$
By (\ref{equation a2 psi2 vaephi2}), one has
$$
{\rm Vol}(M^n)\leq\frac{\left( n+1\right) {\rm Vol}\left( \mathbb{S}^{n+1}\right) }{n{\rm Vol}\left(\mathbb{S}^{n} \right)}
\sup_{a\in \mathbb{S}^{n+1}}
 {\rm Vol }{\left\lbrace \varphi_a=0\right\rbrace }.
$$
\end{proof}

Combining the intrinsic and extrinsic geometry, Ge and Li generalized Einstein manifolds to  Integral-Einstein (IE)  submanifolds in \cite {Ge Li 2020}.
\begin{defn}\cite{Ge Li 2020}\label{defintion integral Eins manifold}
Let $M^n$ $(n\geq3)$ be a compact submanifold in the Euclidean space $\mathbb{R}^{N}$. Then $M^n$ is an IE submanifold if and only if  for any unit vector $a\in \mathbb{S}^{N-1}$
\begin{equation*}\label{equation integral Einstein manifold}
\int_{M}\left( {\rm Ric}-\frac{R}{n}\mathbf{g}\right)
(a^{\rm T},a^{\rm T})=0,
\end{equation*}
 where $a^{\rm T}\in\Gamma(TM)$ denotes the tangent component of the constant vector $a$ along $M^n$; $ {\rm Ric}$ is the Ricci curvature tensor and $R$ is the scalar curvature.
\end{defn}

\begin{cor}\label{cor Volume estimation of Einstein minimal hypersurface varphi2a and varphia S equiv constant}
Let $ M^n$ be a closed immersed, non-totally geodesic,  minimal hypersurface in $\mathbb{S}^{n+1}$. If it is IE and CSC  (or CSC  with $S>n$ and constant third mean curvature), then for all $0 \leq s<1$ and $a\in \mathbb{S}^{n+1}$, the following inequality holds:
$$
 {\rm Vol }{\left\lbrace |\varphi_a|=s\right\rbrace }\geq
 C(n,s)
{\rm Vol }{\left\lbrace |\varphi_a|\geq s\right\rbrace },
$$
where
$$
 C(n,s)=
 \left\{
\begin{array}{rcl}
    \frac{n}{2(n+2)C_2},& \ \  &s=0;\\
    \frac{n}{(n+2)C_2\sqrt{1-s^2}},&\ \   &0<s\leq \min\left\lbrace \sqrt{\frac{1}{n+2}},\frac{1}{(n+2)C_2}\right\rbrace;\\
    \frac{ns}{\sqrt{1-s^2}},&\ \   &\min\left\lbrace \sqrt{\frac{1}{n+2}},\frac{1}{(n+2)C_2}\right\rbrace< s<1.
\end{array}
 \right.
$$
\end{cor}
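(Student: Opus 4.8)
The plan is to deduce Corollary~\ref{cor Volume estimation of Einstein minimal hypersurface varphi2a and varphia S equiv constant} from Theorem~\ref{thm Volume estimation of minimal hypersurface varphi2a and varphia}(i) by showing that, under either set of hypotheses, the constant $C_1$ governing the estimate $\inf_{a}\int_M\varphi_a^2\geq C_1\,{\rm Vol}(M^n)$ improves from the value $\tfrac1{2n}$ available for general minimal CSC hypersurfaces (Corollary~\ref{cor Volume estimation of minimal hypersurface varphi2a and varphia S equiv constant}) to the larger, sharp value $C_1=\tfrac1{n+2}$. In fact I expect to prove the exact identity $\int_M\varphi_a^2=\tfrac1{n+2}{\rm Vol}(M^n)$ for every $a\in\mathbb{S}^{n+1}$, and the crux will be to establish the symmetry $\int_M\varphi_a^2=\int_M\psi_a^2$ between the two height functions.

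First I would record three integral identities valid for any closed minimal CSC hypersurface. From Proposition~\ref{prop funda}, $\nabla\varphi_a=a^{\rm T}$ and $\Delta\varphi_a=-n\varphi_a$, so integration by parts gives $\int_M|a^{\rm T}|^2=\int_M|\nabla\varphi_a|^2=n\int_M\varphi_a^2$. Since $M^n$ is minimal with $S$ constant, the same proposition yields $\Delta\psi_a=-S\psi_a$, and because $\nabla\psi_a=-A(a^{\rm T})$ one gets $\int_M(A^2)(a^{\rm T},a^{\rm T})=\int_M|A(a^{\rm T})|^2=\int_M|\nabla\psi_a|^2=S\int_M\psi_a^2$. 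Finally, integrating the pointwise relation (\ref{equation a2 psi2 vaephi2}), $|a^{\rm T}|^2+\varphi_a^2+\psi_a^2=1$, over $M^n$ gives $(n+1)\int_M\varphi_a^2+\int_M\psi_a^2={\rm Vol}(M^n)$.

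The geometric input comes next. By the Gauss equation for a minimal hypersurface of $\mathbb{S}^{n+1}$ one computes ${\rm Ric}-\tfrac{R}{n}\mathbf{g}=\tfrac{S}{n}\mathbf{g}-A^2$ (using $R=n(n-1)-S$), so the Integral-Einstein condition of Definition~\ref{defintion integral Eins manifold} reads exactly $\int_M(A^2)(a^{\rm T},a^{\rm T})=\tfrac{S}{n}\int_M|a^{\rm T}|^2$ for all $a$. Substituting $\int_M|a^{\rm T}|^2=n\int_M\varphi_a^2$ on the right and $\int_M(A^2)(a^{\rm T},a^{\rm T})=S\int_M\psi_a^2$ on the left, and cancelling the factor $S>0$ (positive since $S$ is a positive constant for a non-totally geodesic CSC hypersurface), I obtain $\int_M\psi_a^2=\int_M\varphi_a^2$. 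Feeding this into $(n+1)\int_M\varphi_a^2+\int_M\psi_a^2={\rm Vol}(M^n)$ yields $(n+2)\int_M\varphi_a^2={\rm Vol}(M^n)$, i.e. $\int_M\varphi_a^2=\tfrac1{n+2}{\rm Vol}(M^n)$, so $C_1=\tfrac1{n+2}$.

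For the alternative hypothesis—CSC with $S>n$ and constant third mean curvature—the plan is to reduce to the IE case by invoking the result of Ge and Li~\cite{Ge Li 2020} that such a hypersurface is Integral-Einstein; this is the step I expect to be the main obstacle, since it is the only place where the third–mean–curvature assumption enters and it rests on a Simons-type integral identity rather than the elementary bookkeeping above. Granting $C_1=\tfrac1{n+2}$, the conclusion is immediate: one repeats the proof of Theorem~\ref{thm Volume estimation of minimal hypersurface varphi2a and varphia}(i) verbatim with this value of $C_1$, so that $\tfrac{nC_1}{2C_2}=\tfrac{n}{2(n+2)C_2}$ at $s=0$, $\tfrac{nC_1}{C_2\sqrt{1-s^2}}=\tfrac{n}{(n+2)C_2\sqrt{1-s^2}}$ on the middle range with $\sqrt{C_1}=\sqrt{\tfrac1{n+2}}$ and $\tfrac{C_1}{C_2}=\tfrac1{(n+2)C_2}$, and $\tfrac{ns}{\sqrt{1-s^2}}$ on the final range, which is precisely the stated $C(n,s)$.
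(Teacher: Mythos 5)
Your proposal is correct and follows the same route as the paper: both arguments reduce the corollary to Theorem \ref{thm Volume estimation of minimal hypersurface varphi2a and varphia}(i), whose proof only uses the inequality $\inf_{a}\int_M\varphi_a^2\geq C_1\,{\rm Vol}(M^n)$, and both obtain the improved constant from the identity $\int_M\varphi_a^2=\tfrac{1}{n+2}{\rm Vol}(M^n)$, handling the second hypothesis (CSC, $S>n$, constant third mean curvature) by citing Ge--Li's result that it implies IE. The one difference is that the paper simply cites \cite{Ge Li 2020} for the identity $\int_M\varphi_a^2=\tfrac{1}{n+2}{\rm Vol}(M^n)$, whereas you re-derive it: your chain --- $\int_M|a^{\rm T}|^2=n\int_M\varphi_a^2$, $\int_M|A(a^{\rm T})|^2=S\int_M\psi_a^2$ via $\Delta\psi_a=-S\psi_a$, the Gauss-equation computation ${\rm Ric}-\tfrac{R}{n}\mathbf{g}=\tfrac{S}{n}\mathbf{g}-A^2$ turning the IE condition into $\int_M\psi_a^2=\int_M\varphi_a^2$ (using $S>0$, which holds since $S$ is a nonzero constant), and finally $(n+2)\int_M\varphi_a^2={\rm Vol}(M^n)$ --- is accurate, so your write-up is self-contained at exactly the point where the paper leans on a citation.
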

\begin{proof}
 If $M^n$ is minimal, IE and CSC, then \cite{Ge Li 2020} showed that
\begin{equation*}\label{equation integral Einstein  hypersurface H=0 S=constant}
\int_{M}\varphi_a^2=\frac{1}{n+2}{\rm Vol }(M^n), \ \ a \in \mathbb{S}^{n+1}.
\end{equation*}
Thus, $C_1=\frac{1}{n+2}$ in Theorem \ref{thm Volume estimation of minimal hypersurface varphi2a and varphia}.
For a closed minimal CSC hypersurface in $\mathbb{S}^{n+1}$  with $S>n$ and constant third mean curvature, Ge-Li proved that
it is an IE hypersurface in \cite{Ge Li 2020}. Thus, Corollary  \ref{cor Volume estimation of Einstein minimal hypersurface varphi2a and varphia S equiv constant} is also true in this case.
\end{proof}

\section{Proof of Theorem \ref{Cheeger isoperimetric constant}}
In this section, we will discuss the Cheeger isoperimetric constant of minimal hypersurfaces   in  $\mathbb{S}^{n+1}$.
\begin{defn}\cite{Cheeger 1970}\label{Definition Cheeger isoperimetric constant}
The Cheeger isoperimetric constant of a closed Riemannian
manifold $M^n$    is defined as:
$$
h(M)=\inf_H\frac{{\rm Vol}{\left( H\right) }}
{\min\left\lbrace {\rm Vol}(M_1),{\rm Vol}(M_2)\right\rbrace},
$$
where the infimum  is taken over all the submanifolds $H$ of codimension $1$ of $M^n$; $M_1$ and $M_2$ are submanifolds of $M^n$ with their boundaries in $H$ and satisfy $M = M_1 \sqcup M_2 \sqcup H$ $($a disjoint union$)$.
\end{defn}

\begin{rem}\label{remark cheeger constant and varpsi}
Let $M^n$ be a closed, immersed, minimal  hypersurface   in  $\mathbb{S}^{n+1}$, which is non-totally geodesic. Since there is a vector
${a\in \mathbb{S}^{n+1}}$ such that
${\rm Vol }{\left\lbrace \varphi_a>0\right\rbrace }= {\rm Vol }{\left\lbrace \varphi_a<0\right\rbrace }$,
we have
$$
h(M)\leq
\sup_{a\in \mathbb{S}^{n+1}}
\frac{2
 {\rm Vol }{\left\lbrace \varphi_a=0\right\rbrace }}
 {{\rm Vol }
 \left(  M^n\right)}.
$$
Moreover, if the image of $M^n$ is invariant under the antipodal map, then ${\rm Vol }{\left\lbrace \varphi_a>0\right\rbrace }= {\rm Vol }{\left\lbrace \varphi_a<0\right\rbrace }$ for all $a\in \mathbb{S}^{n+1}$ and 
$$
h(M)\leq
\inf_{a\in \mathbb{S}^{n+1}}
\frac{2
 {\rm Vol }{\left\lbrace \varphi_a=0\right\rbrace }}
 {{\rm Vol }
 \left(  M^n\right)}.
$$
\end{rem}

In 1970,  Cheeger \cite{Cheeger 1970} gave the famous inequality between the first positive eigenvalue $\lambda_1(M)$ of the Laplacian and the Cheeger isoperimetric constant $h(M)$ (see Definition \ref{Definition Cheeger isoperimetric constant}):
$$
h^2(M)\leq
4\lambda_1(M).
$$
Obviously, $\lambda_1(M)\leq n$ for minimal hypersurfaces in $\mathbb{S}^{n+1}$ because of Proposition  \ref{prop funda} and we have
$$
h(M)\leq
2\sqrt{\lambda_1(M)}\leq 2\sqrt{n}.
$$
The Yau Conjecture \cite{Schoen Richard and S. T. Yau 1994} asserts that \emph{
if $ M^n$ is a closed   embedded minimal hypersurface of $\mathbb{S}^{n+1}$, then
 $\lambda_1 (M)=n$.}
 In particular,
 Choi and Wang \cite{Choi Wang 1983} showed that $\lambda_1(M)\geq n/2$ and a careful argument (see \cite[ Theorem 5.1]{Brendle S 2013 survey of recent results}) implied that the strict inequality holds, i.e., $\lambda_1 (M)> n/2$. In addition,
 Tang and Yan \cite{TXY14,TY13} proved the Yau Conjecture in the isoparametric case.
 Choe and Soret \cite{J Choe and M Soret  2009} were  able to verify the Yau Conjecture for the Lawson surfaces and the Karcher-Pinkall-Sterling examples.
For more details and references,  please see the elegant survey by Brendle \cite{Brendle S 2013 survey of recent results}. Besides,  Buser \cite{Buser 1982 isoperimetric} proved that

\begin{lem}[\bf Buser \cite{Buser 1982 isoperimetric}]\label{Buser 1982 isoperimetric}
 If the Ricci curvature of a closed Riemannian manifold ${M}^{n}$ is bounded below by $-(n-1) \delta^{2}\ (\delta \geq 0)$, then
\begin{equation}\label{equation Buser}
\lambda_{1} (M)\leq 2 \delta(n-1) h(M)+10 h^{2}(M).
\end{equation}
\end{lem}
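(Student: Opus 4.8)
The plan is to bound $\lambda_1(M)$ from above by testing the spectral gap against the heat-regularized characteristic function of one side of a Cheeger-minimizing decomposition, following the semigroup method of Ledoux. Let $H$ be a minimizing (or almost minimizing) hypersurface realizing $h(M)$, so that $M = M_1 \sqcup M_2 \sqcup H$ and, after relabeling, $V_1 := \mathrm{Vol}(M_1) \leq V_2 := \mathrm{Vol}(M_2)$ with $\mathrm{Vol}(H) = h(M)\,V_1$. Set $V := \mathrm{Vol}(M)$, let $f := \chi_{M_1}$ and $\tilde f := f - V_1/V$, so that $\int_M \tilde f = 0$. Writing $P_t = e^{t\Delta}$ for the heat semigroup with $\Delta$ the nonpositive Laplacian, the spectral theorem together with $\tilde f \perp \mathrm{const}$ gives $\lVert P_t \tilde f\rVert_2 \leq e^{-\lambda_1 t}\lVert \tilde f\rVert_2$, and since $P_t$ fixes constants, $(I - P_t)\tilde f = (I - P_t)f$. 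Hence
\begin{equation*}
(1 - e^{-\lambda_1 t})\,\lVert \tilde f\rVert_2 \leq \lVert (I - P_t)\tilde f\rVert_2 = \lVert (I - P_t)f\rVert_2 .
\end{equation*}

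The analytic heart is a smoothing estimate for $P_t$ that converts the Dirichlet data of $f$ into its perimeter. Setting $K := (n-1)\delta^2$, I would establish, via the Bochner formula and the Bakry--Émery $\Gamma_2$-calculus under $\mathrm{Ric}\geq -K$, the two gradient bounds $\lVert \nabla P_s f\rVert_1 \leq e^{Ks}\lVert \nabla f\rVert_1$ and $\sqrt{s}\,\lVert \nabla P_s g\rVert_\infty \leq C(n)\,e^{Ks}\lVert g\rVert_\infty$. Then for any $g$ with $\lVert g\rVert_\infty \leq 1$, writing $f - P_t f = -\int_0^t \Delta P_s f\,ds$ and symmetrizing the semigroup, one obtains $\int_M (f - P_t f)\,g = \int_0^t \int_M \langle \nabla P_{s/2}f, \nabla P_{s/2}g\rangle\,ds$, whence
\begin{equation*}
\lVert (I - P_t)f\rVert_1 \leq C(n)\,\lVert \nabla f\rVert_1 \int_0^t \frac{e^{Ks}}{\sqrt s}\,ds \leq 2C(n)\sqrt{t}\,e^{Kt}\,\mathrm{Vol}(H),
\end{equation*}
where the total variation $\lVert \nabla f\rVert_1$ of the characteristic function equals the perimeter $\mathrm{Vol}(H)$ (interpreting $f$ as a $BV$ limit of smooth functions). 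Proving these semigroup gradient estimates with the correct dependence on $Ks$ is precisely the step where the Ricci lower bound is used, and it is the main obstacle.

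With the smoothing estimate in hand the rest is bookkeeping. Since $0\leq f, P_t f\leq 1$ we have $\lVert (I-P_t)f\rVert_\infty \leq 1$, so $\lVert (I-P_t)f\rVert_2^2 \leq \lVert (I-P_t)f\rVert_1$. Combining this with the spectral inequality, with $\lVert \tilde f\rVert_2^2 = V_1 V_2/V \geq V_1/2$ (as $V_2 \geq V/2$), and with $\mathrm{Vol}(H) = h(M)V_1$, yields
\begin{equation*}
(1 - e^{-\lambda_1 t})^2 \leq \frac{\lVert (I-P_t)f\rVert_1}{\lVert \tilde f\rVert_2^2} \leq \frac{2C(n)\sqrt t\,e^{Kt}\,h(M)V_1}{V_1/2} = 4C(n)\sqrt t\,e^{Kt}\,h(M).
\end{equation*}

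Finally I would optimize in $t$. Choosing $t$ comparable to $\min\{1/K,\,1/h(M)\}$ and using $1 - e^{-\lambda_1 t}\geq \tfrac12\lambda_1 t$ in the relevant range separates two regimes: the factor $e^{Kt}$ forces $t\lesssim 1/K$ and produces the linear term $2\delta(n-1)h(M)$, while for $\delta$ small the balance of $\sqrt t$ against $h(M)$ produces the quadratic term $10\,h^2(M)$. Extracting the sharp constants $2$ and $10$ requires tracking $C(n)$ through the Bochner estimate and performing the two-parameter optimization carefully; this is routine but delicate, and recovers Buser's precise numerology. I would also record the alternative geometric route of Buser: cover $M$ by balls on which relative volume comparison (Bishop--Gromov) yields a local isoperimetric inequality bounding the measure of the transition collar around $H$ by its perimeter, then build an explicit mean-zero test function and estimate its Rayleigh quotient directly; there the possible focusing of tubes around an arbitrary $H$ is exactly the difficulty the covering argument circumvents.
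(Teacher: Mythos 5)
First, a point of reference: the paper does not prove this lemma at all --- it is imported verbatim, with citation, from Buser's 1982 paper, whose own proof is the geometric one (Bishop--Gromov comparison plus an explicit covering/test-function construction) that you relegate to your closing sentence. Your proposal is instead Ledoux's semigroup approach, so it is necessarily a different route, and its skeleton is sound: the spectral step, the symmetrization identity, the interpolation $\|(I-P_t)f\|_2^2\le\|(I-P_t)f\|_1$, and the two deferred gradient bounds (the commutation estimate $|\nabla P_s f|\le e^{Ks}P_s|\nabla f|$, giving $\|\nabla P_s f\|_1\le e^{Ks}\|\nabla f\|_1$, and the dimension-free reverse Poincar\'e bound $\sqrt{s}\,\|\nabla P_s g\|_\infty\le \tfrac{1}{\sqrt 2}e^{Ks}\|g\|_\infty$) are all standard consequences of $\mathrm{Ric}\ge -K$. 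Carried out with these sharp constants, your chain yields $(1-e^{-\lambda_1 t})^2\le 4\sqrt{t}\,e^{Kt}h(M)$ for all $t>0$.

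The genuine gap is the last step, where you assert that optimizing in $t$ ``recovers Buser's precise numerology''; it does not, and this is not routine bookkeeping but a provable obstruction. Take $\delta=0$: your inequality becomes $(1-e^{-c})^2\le 4\sqrt{c/\lambda_1}\,h$ with $c=\lambda_1 t$, and minimizing $16c/(1-e^{-c})^4$ (optimum at $e^c=1+4c$) gives $\lambda_1\le 56\,h^2$; even the sharper single-power variant, using $\int \tilde f(I-P_t)\tilde f\ge (1-e^{-\lambda_1 t})\|\tilde f\|_2^2$, only gives $\lambda_1\le 39.3\,h^2$, against the required $10h^2$. Worse, in the linear regime the method is capped: any bound extracted from $1-e^{-\lambda_1 t}\le 2\sqrt{t}e^{Kt}h$ satisfies $\lambda_1\le \inf_t\frac{-\ln(1-2\sqrt t e^{Kt}h)}{t}\ge \inf_t \frac{2e^{Kt}h}{\sqrt t}=2\sqrt{2e}\,\sqrt{K}\,h\approx 4.66\sqrt{n-1}\,\delta h$, which exceeds the stated coefficient $2(n-1)\delta$ whenever $n\le 6$; so for small $h$ and low dimension no optimization of your chain can produce the lemma as stated. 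What your pipeline proves is a Buser-type inequality $\lambda_1\le C_1\sqrt{K}h+C_2h^2$ with universal but strictly worse constants; matching (indeed improving) Buser's constants by semigroup methods is possible, but requires the much finer Gaussian-type gradient estimates of De Ponti--Mondino, not a tracking of $C(n)$ through Bochner. The constants are not cosmetic here: the paper uses exactly the coefficients $2\delta(n-1)$ and $10$ when it solves $\frac n2<2\delta(n-1)h+10h^2$ to obtain the lower bound for $h(M)$ in Theorem \ref{Cheeger isoperimetric constant}(i), so a weaker-constant version would not suffice.
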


Next, we will prove Theorem \ref{Cheeger isoperimetric constant}  by Lemma \ref{lem Volume estimation of minimal hypersurface}, Lemma \ref{Buser 1982 isoperimetric} and Corollary \ref{cor Volume estimation of minimal hypersurface varphi2a and varphia s=0}.

\begin{proof}[\textbf{Proof of Theorem $\mathbf{\ref{Cheeger isoperimetric constant}}$}]
Without loss of generality, assuming that
$ {\rm Vol }{\left\lbrace \varphi_a>0\right\rbrace}\geq 
{\rm Vol }{\left\lbrace \varphi_a<0\right\rbrace }$, one has
\begin{equation}\label{equation  cheeger constant}
h(M)\leq
\frac{
 {\rm Vol }{\left\lbrace \varphi_a=0\right\rbrace }}
 {{\rm Vol }{\left\lbrace \varphi_a<0\right\rbrace }}.
\end{equation}
For $ {\rm Vol }{\left\lbrace \varphi_a>0\right\rbrace}\leq 
{\rm Vol }{\left\lbrace \varphi_a<0\right\rbrace }$, the proof is similar and the following estimates of inequalities can be found in Ge and Li \cite{Ge Li 2022 Volume gap}.
By Proposition \ref{prop funda}, for any $a\in {\mathbb{S}^{n+1}}$, $\int_{M}\varphi_{a}=0.$ Thus
\begin{equation}\label{equation  varphia leq 0}
\int_{{\left\lbrace \varphi_a> 0\right\rbrace  }}\varphi_{a}=\int_{\left\lbrace \varphi_a
< 0\right\rbrace  }
-\varphi_a=\frac{1}{2}\int_{M}|\varphi_a|.
\end{equation}
The divergence theorem  shows that
$$
\int_{\left\lbrace \varphi_a< 0\right\rbrace }\Delta \varphi_a^2=0,
$$
and by
$\Delta \varphi_a^2=-2n\varphi_a^2+2|a^{\rm T}|^2$, one has
\begin{equation}\label{equation  n varphia2 leq at2}
n\int_
{\left\lbrace \varphi_a< 0\right\rbrace }
\varphi_a^2=
\int_{\left\lbrace \varphi_a< 0\right\rbrace }
|a^{\rm T}|^2.
 \end{equation}
Then, due to  (\ref{equation a2 psi2 vaephi2}) and (\ref{equation  n varphia2 leq at2}),  we have
\begin{equation}\label{equation  varphia2 leq 0}
(n+1)\int_{{\left\lbrace \varphi_a< 0\right\rbrace }}\varphi_a^2\leq
\int_{\left\lbrace \varphi_a< 0\right\rbrace  }
 1.
 \end{equation}
 By the Cauchy-Schwarz inequality and (\ref{equation  varphia2 leq 0}), one has
 \begin{equation}\label{equation  varphia1 2 leq 0}
 \sqrt{\frac{1}{n+1}
 }
  \int_{\left\lbrace \varphi_a< 0\right\rbrace  }1
  \geq
\sqrt{
 \int_{\left\lbrace \varphi_a< 0\right\rbrace  }1
 \int_{{\left\lbrace \varphi_a< 0\right\rbrace }}\varphi_a^2
}
 \geq
 \int_{\left\lbrace \varphi_a< 0\right\rbrace }
-\varphi_a.
  \end{equation}
By Corollary \ref{cor Volume estimation of minimal hypersurface varphi2a and varphia s=0}, (\ref{equation  cheeger constant}),  (\ref{equation  varphia leq 0}) and (\ref{equation  varphia1 2 leq 0}), we have
$$
\frac{ {\rm Vol }{\left\lbrace \varphi_a=0\right\rbrace }}
{h(M)}
\geq
 {\rm Vol }{\left\lbrace \varphi_a<0\right\rbrace }\geq
\frac{\sqrt{n+1}}{2}\int_{M}|\varphi_a|\geq
\frac{2\sqrt{n+1}}{C_0(n)}\int_{M}\varphi^2_a.
$$
Hence,   by Lemma \ref{lem Volume estimation of minimal hypersurface} we have
\begin{equation*}
 {\rm Vol }{\left\lbrace \varphi_a=0\right\rbrace }\geq
 \frac{2\sqrt{n+1}}{C_0(n)}h(M)\int_{M}\varphi^2_a\geq
 \frac{2\sqrt{n+1}C_1}{C_0(n)}h(M) {\rm Vol }{\left( M^n\right) }.
\end{equation*}
\textbf{Case $({\rm i})$.}
Since $M^n$ is a minimal hypersurface in  $\mathbb{S}^{n+1}$, the Ricci curvature 
 is given by
$$
{\rm Ric}\left( X,Y\right)=(n-1)g\left( X,Y\right)-g\left(AX,AY \right),\ \ \ \ \      X,Y\in \mathfrak{X}(M).
$$
Let  $\lambda_1(A), \lambda_2(A),\cdots,\lambda_n(A) $ denote the  eigenvalues of the shape operator $A$. 
We obtain
$$\sum_{i=1}^{n}\lambda_i=0,\ \sum_{i=1}^{n}\lambda_i^2=\|A\|^2=S,$$
and
$$
\begin{aligned}
0=\sum_{i,j=1}^{n}\lambda_i\lambda_j
&=\lambda_1^2+
2\sum_{j=2}^{n}\lambda_1\lambda_j+
\sum_{i,j=2}^{n}\lambda_i\lambda_j\\
&\leq-\lambda_1^2+\sum_{i,j=2}^{n}\frac{\lambda_i^2+\lambda_j^2}{2}\\
&=(n-1)S-n\lambda_1^2.
\end{aligned}
$$
Thus
$$
{\rm Ric}\left( X,X\right)\geq\left(  n-1-\lambda_1^2\right) g\left(X,X \right)
\geq -(n-1)\frac{S-n}{n}g\left(X,X \right).
$$
By Lemma \ref{Buser 1982 isoperimetric} and 
$\lambda_1 (M)> n/2$ (cf. Choi-Wang \cite{Choi Wang 1983} and Brendle \cite{Brendle S 2013 survey of recent results}), one has
\begin{equation*}
\frac{n}{2}< \lambda_1(M)\leq 2 \delta(n-1) h(M)+10 h^{2}(M).
\end{equation*}
Note that $S_{\max}\geq n$ for all non-totally geodesic minimal hypersurfaces in $\mathbb{S}^{n+1}$ by Simons' inequality \cite{Simons68}
$$
\int_{M}S\left( S-n\right)\geq0.
$$
Setting $\delta=\sqrt{\frac{S_{\max}-n}{n}}$, we have
$$
h(M)>\frac{-\delta(n-1)+\sqrt{\delta^2(n-1)^2+5n}}{10}.
$$
\textbf{Case $({\rm ii})$.}
If the image of $M^n$ is invariant under the antipodal map, the proof is complete by Remark \ref{remark cheeger constant and varpsi}.  
\end{proof}
\begin{rem}
If ${M}^{n}$ is a minimal isoparametric hypersurface with $g\geq 2 $ distinct principal curvatures  in  $\mathbb{S}^{n+1}$, then  $\lambda_1 (M)=n$
$($cf. Tang-Yan \cite{TY13}$)$,
$S\equiv(g-1)n$ and $\delta=\sqrt{g-2}$
$\left( 2 \leq g\leq 6\right) $.
Thus, (\ref{equation Buser}) implies that
$$
h(M)\geq \frac{-\sqrt{g-2}(n-1)+\sqrt{(g-2)(n-1)^2+10n}}{10}.$$
In fact, Muto \cite{Muto 1988} carefully estimated the Cheeger isoperimetric constant of  minimal isoparametric hypersurfaces and got better results.
\end{rem}
\begin{rem}
Let  $M^n$ be a closed embedded minimal  hypersurface in  $\mathbb{S}^{n+1}$. If  $S<c(n)$ and $c(n)$ depends only on $n$, then there is  a positive constant
   $\eta(n)>0$, depending only on $n$,
   such that
$ h(M)>\eta(n).$
\end{rem}


\begin{acknow}
The authors sincerely thank  the referee for their  many valuable and constructive suggestions, which made this paper more readable.  They also thank Professors Zizhou Tang and  Jianquan Ge for their very detailed and valuable comments.
\end{acknow}

\end{document}